\newtheorem{theorem}{Theorem}[section]
\newtheorem{lemma}[theorem]{Lemma}
\theoremstyle{definition}
\newtheorem{definition}[theorem]{Definition}
\newtheorem{example}[theorem]{Example}
\newtheorem{proposition}[theorem]{Proposition}
\newtheorem{corollary}[theorem]{Corollary}
\theoremstyle{remark}
\newtheorem{remark}[theorem]{Remark}
\numberwithin{equation}{section}
\begin{document}
\title{Length Minimising Bounded Curvature Paths in Homotopy Classes}
\author{Jos\'{e} Ayala}
\address{FIA, Universidad Arturo Prat, Iquique, Chile}
\email{jayalhoff@gmail.com}
\subjclass[2000]{Primary 49Q10, 55Q05; Secondary 90C47, 51E99, 68R99}
\keywords{Homotopy class, regular homotopy, bounded curvature, Dubins paths}
\maketitle

\begin{abstract} Choose two points in the tangent bundle of the Euclidean plane $(x,X),(y,Y)\in T{ \mathbb R}^2$. In this work we characterise the immersed length minimising paths with a prescribed bound on the curvature starting at $x$, tangent to $X$; finishing at $y$, tangent to $Y$, in each connected component of the space of paths with a prescribed bound on the curvature from $(x,X)$ to $(y,Y)$.
\end{abstract}

\section{Introduction}
A planar bounded curvature path corresponds to a $C^1$ and piecewise $C^2$ path lying in ${\mathbb R}^2$ having its curvature bounded by a positive constant and connecting two elements of the tangent bundle $T{\mathbb R}^2$. Length minimising bounded curvature paths, widely known as Dubins paths, have proven to be extraordinarily useful in robotics since a bound on the curvature is a turning circle constraint for the trajectory of a robot along a path. Bounded curvature paths admitting self intersections arise naturally in applications, for example, in the design of a system of interconnected tunnels navigated by vehicles. These paths are important as projections of embedded 3-dimensional paths (see \cite{brazil 1}). Counterintuitively, bounded curvature paths admitting self intersections may be length minimisers for certain choices of initial and final points in $T{\mathbb R}^2$. This work corresponds to the culmination of our study on minimal length elements in spaces of bounded curvature paths in $\mathbb R^2$ and evokes some of the machinery developed in \cite{papera}, \cite{paperc} and \cite{paperd}. In \cite{papera} we developed a method for finding length minimising bounded curvature paths for any given initial and final points and directions as follows. We start with an arbitrary bounded curvature path and divide it up into a finite number of pieces of suitable length called  {\it fragments}. Then, for each fragment, we construct a piecewise constant curvature path (close to the fragment) of length at most the length of the fragment. Then, we proceed to {\it replace} the fragments by piecewise constant curvature paths\footnote{We avoid using homotopy since the replacement does not involves a continuous one-parameter family of paths.}. The process of replacing a bounded curvature path by a piecewise constant curvature path is called {\it replacement}. After all of the fragments have been replaced what is left is a concatenation of piecewise constant curvature paths (possibly arbitrarily close to the original path) called {\it normalisation}. Subsequently, we developed a series of results involving larger pieces of the normalisation, and again, we replace such pieces (or components) by piecewise constant curvature paths of less  {\it complexity} observing that a path that can be perturbed while decreasing its length cannot be candidate to be length minimiser; we refer to this as {\it reduction process}. After a finite number of steps we obtain the characterisation of the global length minimisers, a well known result obtained by Dubins in \cite{dubins 1}. A crucial observation is that the act of replacing a fragment by a piecewise constant curvature path does not involve a continuity argument, and therefore, without an explicit homotopy preserving the bound on the curvature between these paths nothing can be said in terms of homotopy classes. This rather technical issue is addresses in \cite{paperd} were we classified the homotopy classes of bounded curvature paths for any given initial and final points in $T {\mathbb R}^2$.  In \cite{paperc} we proved that for certain initial and final points there exist a homotopy class whose elements are embedded. These paths cannot be deformed to paths with self intersections without violating the curvature bound.

In this work we develop a machinery to justify following procedure. For a bounded curvature path (possibly with loops) lying in a prescribed homotopy class we continuously deform the path into a piecewise constant curvature path by applying the continuity argument developed in \cite{paperd} so we make sure both paths lie in the same connected component. Then, we homotope the obtained piecewise constant curvature path to a shorter one while reducing the number of arcs of circle and/or line segments (complexity of the path) taking special care into components with loops. After applying this process a finite number of times we achieve the desired characterisation for the minimum length elements in spaces of planar bounded curvature paths with prescribed {\it turning number}. Our main result, Theorem \ref{singudub}, characterises in particular the global length minimisers generalising Dubins result in \cite{dubins 1}. The methodology used to prove our main result can be seen in Figure \ref{figsingmovccc}. Observe that a global length minimiser may not be unique and, in such a case, these paths are elements in different homotopy classes (see Figure \ref{figccproxcond1}). Many of the results exposed in this note have been verified with {\sc dubins explorer} a mathematical software for bounded curvature paths developed by Jean Diaz and the author (see \cite{dubinsexplorer}). In particular, the software computes the length minimisers and the number of connected components of the space of bounded curvature paths from $(x,X)$ to $(y,Y)$. Here we also give an elementary proof for: {The length of a closed loop whose absolute curvature is bounded by 1 is at least $2\pi$}. We suggest the reader to read this work in conjunction with \cite{papera}, \cite{paperc} and \cite{paperd}.

\section{preliminaries}
Denote by $T{\mathbb R}^2$ the tangent bundle of ${\mathbb R}^2$. Recall the elements in $T{\mathbb R}^2$ correspond to pairs $(x,X)$ sometimes denoted just by {\sc x}. Here the first coordinate corresponds to a point in ${\mathbb R}^2$ and the second to a tangent vector to ${\mathbb R}^2$ at $x$.

\begin{definition} \label{adm_pat} Given $(x,X),(y,Y) \in T{\mathbb R}^2$,  a path $\gamma: [0,s]\rightarrow {\mathbb R}^2$ connecting these points is a {\it bounded curvature path} if:
\end{definition}
 \begin{itemize}
\item $\gamma$ is $C^1$ and piecewise $C^2$.
\item $\gamma$ is parametrised by arc length (i.e $||\gamma'(t)||=1$ for all $t\in [0,s]$).
\item $\gamma(0)=x$,  $\gamma'(0)=X$;  $\gamma(s)=y$,  $\gamma'(s)=Y.$
\item $||\gamma''(t)||\leq \kappa$, for all $t\in [0,s]$ when defined, $\kappa>0$ a constant.
\end{itemize}
Of course, $s$ is the arc length of $\gamma$.

The first condition means that a bounded curvature path has continuous first derivative and piecewise continuous second derivative. For the third condition makes sense, without loss of generality, we extend the domain of $\gamma$ to $(-\epsilon,s+\epsilon)$ for $\epsilon>0$. The third item is called endpoint condition. The fourth item means that  bounded curvature paths have absolute curvature bounded above by a positive constant which can be choose to be $\kappa=1$. In addition, note that the first item is a completeness condition since the length minimising paths satisfying simultaneously the last three items in Definition \ref{adm_pat} are is $C^1$ and piecewise $C^2$, compare \cite{papera} or \cite{dubins 1}. Generically, the interval $[0,s]$ is denoted by $I$. Recall that path a $\gamma: I \rightarrow {\mathbb R}^2$ has a self intersection if there exists $t_1,t_2 \in I$, with $t_1\neq t_2$ such that $\gamma(t_1)=\gamma(t_2)$.

\begin{definition} A $cs$ path is a bounded curvature path corresponding to a finite number of concatenations of line segments (denoted by {\sc s}) and arcs of a unit circle (denoted by {\sc c}) see Figures \ref{figccproxcond1},
\ref{figfunlem} and \ref{figrep1} for examples. Taking into account the path orientation {\sc r} denotes a clockwise traversed arc and {\sc l} denotes a counterclockwise traversed arc. The number of line segments plus the number of circular arcs is called the {\it complexity} of the path.
\end{definition}

\begin{example} {A {\sc csc} path is a $cs$ path corresponding to a concatenation of an arc of a unit radius circle, followed by a line segment, followed by an arc of a unit radius circle (see Figure \ref{figccproxcond1}). A {\sc ccc} path is a $cs$ path corresponding to a concatenation of three arcs of unit radius.}  For {\sc csc} and {\sc ccc} paths we obtain six possible configurations given by {\sc lsl}, {\sc rsr}, {\sc lsr}, {\sc rsl}, {\sc lrl} and {\sc rlr}. We call the {\sc lsl}, {\sc rsr} symmetric paths and the {\sc lsr}, {\sc rsl} skew paths.
\end{example}

 \begin{definition} \label{admsp} Given $\mbox{\sc x,y}\in T{\mathbb R}^2$. The space of bounded curvature paths satisfying the given endpoint condition is denoted by $\Gamma(\mbox{\sc x,y})$. \end{definition}

  When a path is continuously deformed under parameter $p$ we reparametrise each of the deformed paths by its arc length. Thus $\gamma: [0,s_p]\rightarrow {\mathbb R}^2$ describes a deformed path at parameter $p$, with $s_p$ corresponding to its arc length. This idea will be applied in the next definition.

\begin{definition}  \label{hom_adm} Given $\gamma,\eta \in \Gamma(\mbox{\sc x,y})$. A {\it bounded curvature homotopy}  between $\gamma: [0,s_0] \rightarrow  {\mathbb R^2}$ and $\eta: [0,s_1] \rightarrow  {\mathbb R^2}$ corresponds to a continuous one-parameter family of immersed paths $ {\mathcal H}_t: [0,1] \rightarrow \Gamma(\mbox{\sc x,y})$ such that:
\begin{itemize}
\item ${\mathcal H}_t(p): [0,s_p] \rightarrow  {\mathbb R}^2$ for $t\in [0,s_p]$ is an element of $\Gamma(\mbox{\sc x,y})$ for all $p\in [0,1]$.
\item $ {\mathcal H}_t(0)=\gamma(t)$ for $t\in [0,s_0]$ and ${\mathcal H}_t(1)=\eta(t)$ for $t\in [0,s_1]$.
\end{itemize}
\end{definition}

\begin{remark}(\it On homotopy classes) Given $\mbox{\sc x,y}\in T{\mathbb R^2}$ then:
\end{remark}
\begin{itemize}
\item Two bounded curvature paths are {\it bounded-homotopic} if there exists a bounded curvature homotopy from one path to another. The previously described relation defined by $\sim$ is an equivalence relation.
\item A {\it homotopy class} in $\Gamma(\mbox{\sc x,y})$ corresponds to an equivalence class in $\Gamma(\mbox{\sc x,y})/\sim$.
\item A {\it homotopy class} is a maximal path connected set in $\Gamma(\mbox{\sc x,y})$.
\end{itemize}

\begin{definition}Let $\mbox{\sc C}_ l(\mbox{\sc x})$ be the unit circle tangent to $x$ and to the left of $X$. Analogous interpretations apply for $\mbox{\sc C}_ r(\mbox{\sc x})$, $\mbox{\sc C}_ l(\mbox{\sc y})$ and $\mbox{\sc C}_ r(\mbox{\sc y})$ (see Figure \ref{figccproxcond1}). These circles are called {\it adjacent circles}. Denote their centres with lowercase letters, so the centre of $\mbox{\sc C}_ l(\mbox{\sc x})$ is denoted by $c_l(\mbox{\sc x})$. 
\end{definition}

Next we analyse the interaction between the position of $x$ and $y$, the initial and final vectors $X$ and $Y$ and the curvature bound.  We look at the arrangements for the adjacent circles in $\mathbb R^2$. The ideas here discussed were considered in oder to prove the existence of a compact planar region (denoted by $\Omega)$ that {\it traps} embedded bounded curvature paths (see  \cite{paperc} and Figures \ref{figccproxcond1} and \ref{figfunlem}). The following conditions give important information about the topology and geometry of $\Gamma(\mbox{\sc x,y})$.

\begin{equation} d(c_l(\mbox{\sc x}),c_l(\mbox{\sc y}))\geq 4 \quad \mbox{and}\quad d(c_r(\mbox{\sc x}),c_r(\mbox{\sc y}))\geq4 \label{con_a}\tag{i}\end{equation}
 \begin{equation} d(c_l(\mbox{\sc x}),c_l(\mbox{\sc y}))< 4 \quad \mbox{and}\quad d(c_r(\mbox{\sc x}),c_r(\mbox{\sc y}))\geq 4 \label{con_b}\tag{ii} \end{equation}
  \begin{equation} d(c_l(\mbox{\sc x}),c_l(\mbox{\sc y}))\geq4 \quad \mbox{and}\quad d(c_r(\mbox{\sc x}),c_r(\mbox{\sc y}))< 4  \label{con_b'}\tag{iii} \end{equation}
   \begin{equation} d(c_l(\mbox{\sc x}),c_l(\mbox{\sc y}))< 4 \quad \mbox{and}\quad d(c_r(\mbox{\sc x}),c_r(\mbox{\sc y}))< 4 \label{con_c}\tag{iv} \end{equation}

The idea is to correlate the conditions (i), (ii), (iii) and (iv) with the number of connected components in $\Gamma({\mbox{\sc x,y}})$. 
{ \begin{figure} [[htbp]
 \begin{center}
\includegraphics[width=.8\textwidth,angle=0]{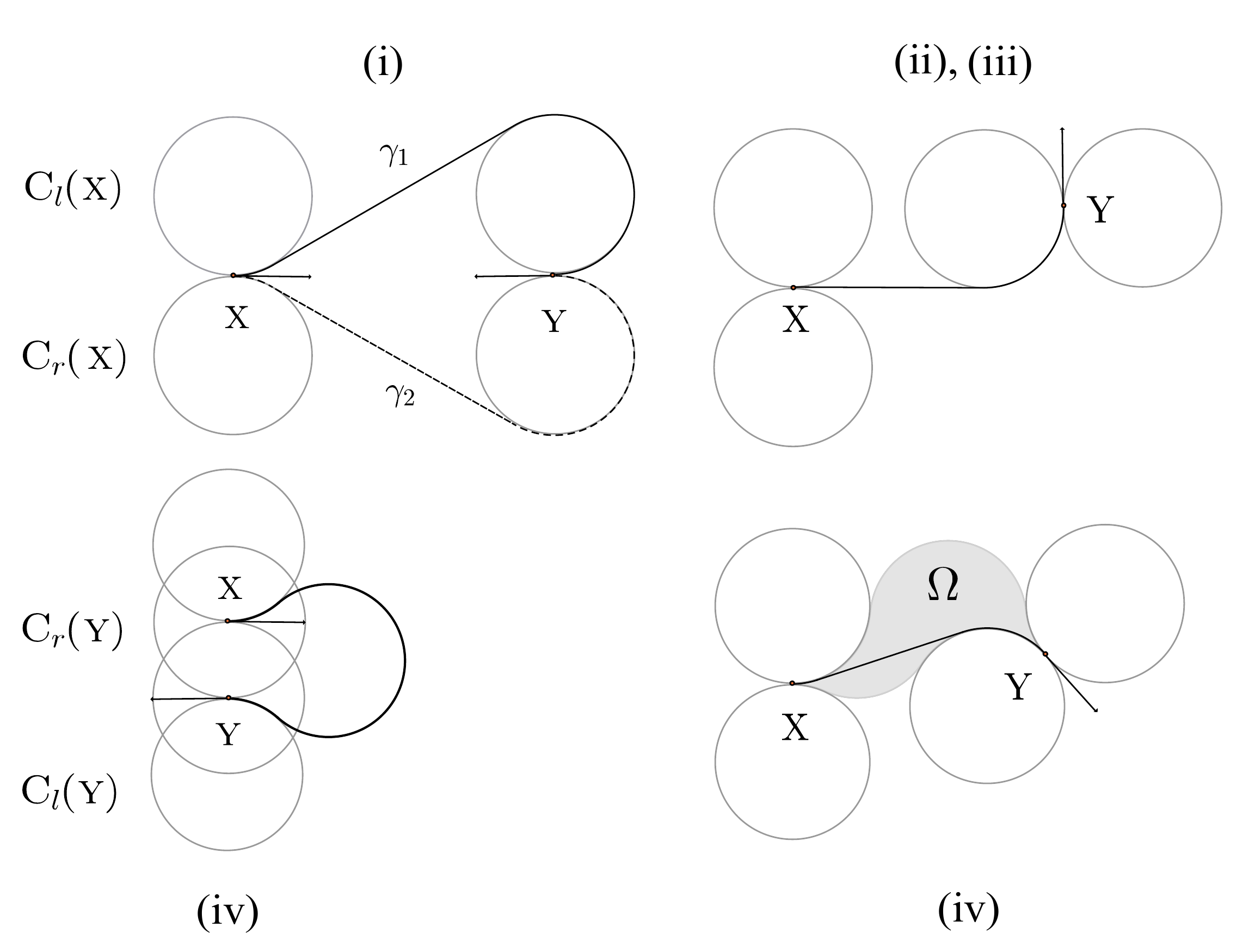}
\end{center}
\caption{Are the paths $\gamma_1$ and $\gamma_2$ in the same connected component? Here we show examples of configurations for the adjacent circles according to (i), (ii), (iii) and (iv). The illustrated paths are length minimisers in their respective connected component.}
 \label{figccproxcond1}
\end{figure}}

\begin{remark} {\it (On proximity conditions).}\hfill 
\end{remark}
\begin{itemize}
\item If the endpoint condition satisfies (i) we say that $\Gamma({\mbox{\sc x,y}})$ satisfies proximity condition {\sc A}.
\item  If the endpoint condition satisfies (ii) or (iii) we say that $\Gamma({\mbox{\sc x,y}})$ satisfies proximity condition {\sc B}.
\item If the endpoint condition satisfies (iv) and the elements in $\Gamma({\mbox{\sc x,y}})$ are bounded-homotopic to paths of arbitrary length we say that $\Gamma({\mbox{\sc x,y}})$ satisfies proximity condition {\sc C}.

\noindent {\it A crucial result in \cite{paperd} states that certain endpoint conditions give rise to homotopy classes of embedded bounded curvature paths; such paths are trapped in a planar compact region $\Omega$ as proved in \cite{paperc} (see Figures \ref{figccproxcond1} or \ref{figfunlem})}.

\item If the endpoint condition satisfies (iv) we say that $\Gamma({\mbox{\sc x,y}})$ satisfies proximity condition {\sc D} if $\Gamma({\mbox{\sc x,y}})$ has a homotopy class of embedded paths. 
\end{itemize}

Note that conditions {\sc A}, {\sc B} and {\sc C} lead to spaces $\Gamma({\mbox{\sc x,y}})$ whose elements are bounded-homotopic to paths arbitrarily long. An important result in \cite{paperc} says that paths inside $\Omega$ have bounded length. Here we will focus on conditions {\sc A, B, C} and {\sc D} to analyse the length minimisers in homotopy classes in $\Gamma({\mbox{\sc x,y}})$.

\section{Normalisation of paths and the homotopy argument}
 
 Next we describe the overall strategy we follow to prove Theorem \ref{singudub}, our main result. The concepts in this section can be found in detail in \cite{papera} and \cite{paperd}. We now introduce the {\it normalisation} of a bounded curvature path. The idea is to divide up any given bounded curvature path into {\it sufficiently small} pieces called {\it fragments}. Let ${\mathcal L}(\gamma,a,b)$ be the length of $\gamma:I \rightarrow {\mathbb R}^2$ restricted to $[a,b]\subset I$. We write ${\mathcal L}(\gamma,0,s)={\mathcal L}(\gamma)$. A {\it fragmentation} of $\gamma$ corresponds to a finite sequence $0=t_0<t_1\ldots <t_m=s$ such that, ${\mathcal L}(\gamma,t_{i-1},t_i)<  1$ with $\sum_{i=1}^m {\mathcal L}(\gamma,t_{i-1},t_i) =s$. A {\it fragment}, is the restriction of $\gamma$ to the interval determined by two consecutive elements in the fragmentation. 

  The following result shows the existence of a {\sc csc} path {close} to any given fragment. The critical argument here is that {\it fragments} are bounded homotopic to a  {\it replacement path}. Another crucial observation is that {the length of the fragment is at most the length of the replacement path}. The  {replacement path} is in fact the unique length minimiser in its homotopy class (see Figure \ref{figfunlem}).

 \begin{proposition} \label{construct} {\it (Proposition 2.13 in \cite{papera})} For a fragment in $\Gamma(\mbox{\sc x,y})$ there exists a {\sc csc} path in $\Gamma(\mbox{\sc x,y})$ having their circular components of length less than $\pi$.
 \end{proposition}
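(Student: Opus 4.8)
The plan is to exploit the smallness of a fragment to build the required \textsc{csc} path by hand and then to estimate its circular arcs. First I would normalise the picture: reparametrise the fragment as $\gamma\colon[0,L]\to\mathbb R^2$ with $L=\mathcal L(\gamma)<1$, and translate and rotate so that $x$ is the origin and $X$ points along the positive horizontal axis. Writing $\theta(t)$ for the angle of $\gamma'(t)$, the curvature bound gives $|\theta'(t)|\le 1$ almost everywhere, so the total turning satisfies $\int_0^L|\theta'|\,\mathrm dt\le L<1$. Consequently every tangent direction of $\gamma$ lies in a cone of angular width less than $1$ about $X$, the net angle $\phi$ between $X$ and $Y$ obeys $|\phi|<1$, and $|y-x|\le L<1$. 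The crucial extra fact, which distinguishes a genuine fragment from an arbitrary pair of nearby configurations, is that $y-x=\int_0^L\gamma'(t)\,\mathrm dt$ is an average of unit tangent vectors all lying in that cone; since a cone of angular width less than $\pi$ is convex, the chord $y-x$ points into the very same cone. Thus both the directions $X,Y$ and the chord direction are pinned inside one thin sector.

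For the construction I would produce a \textsc{csc} path whose two arcs turn in the directions dictated by the sign of $\theta'$ at the two ends of $\gamma$, joined by a common tangent of the corresponding adjacent circles: an external common tangent when the two arcs are to be traversed with the same orientation (types \textsc{lsl}, \textsc{rsr}) and an internal one when they are traversed oppositely (types \textsc{lsr}, \textsc{rsl}), allowing the line segment to degenerate to a point when the two adjacent circles are tangent. Since the adjacent circles all have unit radius, an external common tangent exists whenever their centres are distinct, so the same-orientation construction is always available; in the opposite-orientation case one must instead check that the relevant centres are at distance at least $2$, and this is exactly where I would invoke the compatibility of the endpoint data --- the fact that it is realised by an honest bounded curvature path of length less than $1$ --- to guarantee that the required tangent exists, the extremal situation being two externally tangent circles, which yields a degenerate \textsc{cc} replacement.

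It remains to bound the two circular arcs, and this is the step I expect to be the main obstacle. The angles swept by the arcs are the turning angles from $X$ to the segment direction $D$ and from $D$ to $Y$, and bounding these below $\pi$ amounts to proving that the construction selects the ``short way round'' each adjacent circle rather than its complementary arc. I would establish this by showing that $D$ itself lies in the thin sector from the first paragraph: the circle centres are obtained from $x$ and $y$ by unit translations perpendicular to $X$ and $Y$, so the direction of the line joining the relevant centres (hence the external-tangent direction $D$) differs from the chord direction $y-x$, and therefore from $X$, by a quantity controlled by $L$; an analogous estimate controls the internal-tangent direction at the junction. Feeding this back, the first arc sweeps an angle $O(L)$ and the second at most $|\phi|+O(L)<\pi$, giving the claimed bound with room to spare, and symmetrically if the roles are reversed.

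Finally, the resulting \textsc{csc} path shares its initial and final configurations with $\gamma$, hence lies in $\Gamma(\mbox{\sc x,y})$; the same smallness estimates show it is bounded-homotopic to $\gamma$ (one sweeps the fragment onto it across the thin region they jointly bound) and, being the unique length minimiser in that homotopy class, no longer than $\gamma$ --- the two properties emphasised in the surrounding discussion. The content of the statement as phrased, however, is precisely the existence together with the arc bound, and the decisive point throughout is that it is the fragment hypothesis $L<1$, rather than mere proximity of the two configurations, that simultaneously forces the relevant common tangent to exist and confines both arcs to angles strictly below $\pi$.
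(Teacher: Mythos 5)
The paper itself offers no proof of this proposition --- it is imported verbatim as Proposition 2.13 of \cite{papera} --- so your attempt can only be measured against the construction given there, and on its own terms it has a genuine gap at exactly the step you yourself flag as ``the main obstacle''. Your first paragraph is fine: with $L<1$ the total turning is less than $1$, every tangent lies in a cone of width less than $1$ about $X$, and since that cone is convex the chord $y-x$ lies in it too; moreover the chord has length at least $L\cos 1$, so it is comparable to $L$. The failure is in transferring this control to the adjacent circles. Writing $c=x+N_x$ and $c'=y+N_y$ for the relevant centres ($N_x,N_y$ unit normals), one has $c'-c=(y-x)+(N_y-N_x)$, and $|N_y-N_x|$ is bounded only by the net turning $|\phi|\le L$, which can \emph{exceed} $|y-x|\ge L\cos 1$. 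So the direction of the line of centres --- hence your tangent direction $D$ --- is not within $O(L)$ of the chord direction; it is not controlled at all. The extreme case is explicit: for a fragment that is exactly a left-hand unit arc of angle $\phi$ one computes $y-x=(\sin\phi,\,1-\cos\phi)=N_x-N_y$, so $c_l(\mbox{\sc x})$ and $c_l(\mbox{\sc y})$ \emph{coincide}, and for fragments $C^1$-close to such an arc the two centres are arbitrarily close together while $D$ swings freely through a full turn. In that regime the \textsc{lsl} first arc can be as close to $2\pi$ as one likes, \textsc{rsr} forces both arcs above $\pi$ (the clockwise arcs must sweep $2\pi-\phi/2$), and the two skew types have centre distance crossing the threshold $2$ (at the pure arc one checks $d(c_l(\mbox{\sc x}),c_r(\mbox{\sc y}))=2$ exactly), so the internal tangent you need may simply fail to exist. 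Your appeal to ``compatibility of the endpoint data'' to guarantee the internal tangent is precisely the assertion that needs proof, and your $O(L)$ estimates silently assume away the near-degenerate case, which is the only hard one.

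A secondary, but real, defect is the type-selection rule: choosing the orientations of the two arcs by ``the sign of $\theta'$ at the two ends of $\gamma$'' is neither well defined (a fragment is only piecewise $C^2$, and $\theta'$ may vanish on end intervals or change sign arbitrarily close to the endpoints) nor correct --- the admissible \textsc{csc} type is dictated by the relative position of the two configurations, not by endpoint curvature signs, as one sees by prepending an arbitrarily short arc of either orientation to a fixed fragment without changing the geometry. A repaired argument must run a case analysis over the four types keyed to the actual positions of the four adjacent circles, with the near-coincident-centres case resolved by letting the \textsc{s} component degenerate (the replacement then being essentially a single arc, which for a fragment has length less than $1<\pi$, consistent with the stated bound); this is the content of the explicit construction in \cite{papera}, and it cannot be replaced by the uniform perturbative estimate you propose. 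Your closing remarks (membership in $\Gamma(\mbox{\sc x,y})$, the homotopy, the length comparison) correctly identify properties established elsewhere (Proposition \ref{homotopyfragment}, Lemma \ref{fundlemma}) and are not needed for the statement as phrased.
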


 In this work {\it sufficiently small} means that: {\it Given $\epsilon>0$ corresponding the length of a bounded curvature path $\gamma$, there exists $\delta>0$ such that the curvature at $\gamma(t)$ for $t\in (t-\delta,t+\delta)$ remains bounded by 1}. For sufficiently small fragments the bounded curvature homotopy is a projection of the fragment onto the replacement path (see Figure \ref{figfunlem}). 
 
 \begin{proposition}{\it \label{homotopyfragment} (Proposition 3.6 in \cite{paperd})} A sufficiently small fragment is bounded-homotopic to its replacement path.
\end{proposition}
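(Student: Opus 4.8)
The plan is to construct the required bounded curvature homotopy explicitly as the normal projection of the fragment onto its replacement path, and then to check the defining conditions of Definition \ref{hom_adm}, the crux being the preservation of the curvature bound. Write $\gamma$ for the fragment and $\bar\gamma$ for its replacement path, which exists by Proposition \ref{construct} and shares the endpoint condition with $\gamma$. Since $\bar\gamma$ is a {\sc csc} path its (piecewise defined) curvature $\kappa_0$ satisfies $|\kappa_0|\le 1$, so its radius of curvature is at least $1$ and $\bar\gamma$ admits a tubular neighbourhood in which the Fermi coordinate map $(u,v)\mapsto \bar\gamma(u)+vN(u)$ is a diffeomorphism onto its image, $N$ denoting the unit normal of $\bar\gamma$; note $N$ is continuous across the two junctions of $\bar\gamma$ because the path is $C^1$.

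First I would show that, for a sufficiently small fragment, $\gamma$ lies inside this tubular neighbourhood and projects injectively onto $\bar\gamma$, so that $\gamma$ is the graph $v=f(u)$ of a function $f$ whose $C^2$ norm is small. Because $\gamma$ and $\bar\gamma$ agree in position and tangent direction at both endpoints, $f$ and $f'$ vanish there. I would then define the homotopy by radially scaling the normal displacement: for $p\in[0,1]$ let $\mathcal H_t(p)$ be the arc length reparametrisation of the graph $v=(1-p)f(u)$. At $p=0$ this is $\gamma$ and at $p=1$ it is $\bar\gamma$; the vanishing of $f,f'$ at the endpoints guarantees that every $\mathcal H_t(p)$ satisfies the endpoint condition, and continuity in $p$, the $C^1$ and piecewise $C^2$ regularity, and the immersion property (the last because $(1-(1-p)f\kappa_0)^2+((1-p)f')^2$ stays bounded away from zero for $f$ small) are all immediate from the explicit formula.

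The hard part will be verifying that the curvature $\kappa_g$ of each intermediate graph $v=g(u)$, with $g=(1-p)f$, remains bounded by $1$. Computing the Euclidean curvature of a graph over $\bar\gamma$ in Fermi coordinates gives, on an arc where $\kappa_0\equiv 1$, the expansion $\kappa_g = 1+(g''+g)+O(\|g\|_{C^2}^2)$; the fragment condition $\kappa_f\le 1$ linearises to $f''+f\le 0$, and since $g=(1-p)f$ with $0\le 1-p\le 1$ one has $g''+g=(1-p)(f''+f)\le 0$, so the linear part of $\kappa_g-1$ has the correct sign for every $p$. The remaining difficulty, and where the \emph{sufficiently small} hypothesis is essential, is to absorb the quadratic remainder: refining the fragmentation makes $\|f\|_{C^2}$ as small as desired, which dominates the remainder on the regions where the inequality $f''+f\le 0$ is strict, while on the regions where $\gamma$ meets the curvature bound one must track the sign of the second order terms of the Fermi curvature formula directly. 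I expect this estimate to be the principal obstacle; once it is in place the conditions of Definition \ref{hom_adm} hold and $\gamma$ is bounded homotopic to $\bar\gamma$.
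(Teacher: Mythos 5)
Your overall strategy---project the fragment onto the replacement path and scale the normal displacement---is the same projection idea this paper attributes to Proposition 3.6 of \cite{paperd}, but your write-up leaves the one non-trivial point unproven, and you say so yourself (``I expect this estimate to be the principal obstacle''). That estimate is not a routine smallness argument that can be deferred: it fails in exactly the regime you flag. Where the fragment saturates the curvature bound, the linear margin $f''+f$ vanishes identically, and the second-order terms of the Fermi curvature genuinely have the wrong sign, no matter how small $\|f\|_{C^2}$ is. A concrete computation shows the mechanism. Take the base arc to be the unit circle and the ``displaced'' curve the unit circle centred at $(\epsilon,0)$, i.e.\ $r_1(u)=\epsilon\cos u+\sqrt{1-\epsilon^2\sin^2u}$ in polar coordinates; both have curvature exactly $1$. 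Your homotopy scales the (radial) displacement, giving the family $r_p(u)=p+(1-p)r_1(u)$. At $u=0$ one has $r_p(0)=1+(1-p)\epsilon$, $r_p'(0)=0$, $r_p''(0)=-(1-p)(\epsilon+\epsilon^2)$, whence
\begin{equation*}
\kappa_p(0)\;=\;\frac{r_p-r_p''}{r_p^2}\;=\;\frac{1+2(1-p)\epsilon+(1-p)\epsilon^2}{1+2(1-p)\epsilon+(1-p)^2\epsilon^2}\;=\;1+p(1-p)\epsilon^2+O(\epsilon^3)\;>\;1
\end{equation*}
for every $p\in(0,1)$ and every $\epsilon\neq 0$. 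So both ends of the deformation satisfy the bound while \emph{every} intermediate curve violates it, and shrinking $\epsilon$ (your ``refine the fragmentation'') only shrinks the violation without removing it --- there is no first-order term to absorb the quadratic excess. This is not a counterexample to the proposition (in the actual construction the replacement path of such an arc is not the concentric circle), but it does refute the curvature-preservation claim at the level of generality at which you argue it, so your proof cannot be completed as written.

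The missing content is precisely the continuity argument of \cite{paperd} that this paper invokes: one needs a homotopy adapted to the structure of the replacement path rather than naive linear scaling of the normal displacement. One standard way to sidestep the estimate entirely is to deform by concatenation: for $p\in[0,1]$ follow the fragment on $[0,(1-p)s]$ and then append the {\sc csc} replacement path (Proposition \ref{construct}) of the remaining sub-fragment, which is again a fragment. Every intermediate path is then a $C^1$ concatenation of two paths each of curvature at most $1$, so the bound holds automatically, and continuity in $p$ reduces to continuous dependence of the {\sc csc} construction on its endpoint data, which holds for sufficiently small fragments. Either that route, or a genuinely sharper pointwise analysis of the Fermi curvature along the specific fragment--replacement geometry, is needed to close your gap.
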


 \begin{lemma}\label{fundlemma}  (Lemma 2.12. in \cite{papera}) The length of a replacement path is at most the length of the associated fragment with equality if and only if these paths are identical. (see Figure \ref{figfunlem}).
\end{lemma}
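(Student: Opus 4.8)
The plan is to recast the statement as a constrained length-minimisation problem and then identify the {\sc csc} replacement as its unique minimiser. Write the fragment as $\gamma:[0,\ell]\to\mathbb R^2$ with $\ell=\mathcal L(\gamma)<1$, and after a rigid motion place $x$ at the origin with $X$ along the positive horizontal axis. Encode $\gamma$ by its tangent angle $\theta(t)$, so that $\gamma(t)=\int_0^t(\cos\theta,\sin\theta)\,ds$, the curvature bound reads $|\theta'(t)|\le 1$ almost everywhere, and the endpoint condition becomes the three scalar constraints $\theta(0)=\arg X$, $\theta(\ell)=\arg Y$ and $\int_0^\ell(\cos\theta,\sin\theta)\,ds=y-x$. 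In these variables the length of the path is exactly $\ell$, so the lemma amounts to showing that, among all admissible angle functions realising the same endpoint data, the bang--bang profile corresponding to the replacement $\sigma$ uniquely minimises $\ell$.

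First I would establish minimality. Since the fragment is sufficiently small its total turning is at most $\ell<1$ radian, so the configuration is non-degenerate and, by Proposition \ref{construct}, the replacement $\sigma$ is a {\sc csc} path whose two arcs have length $<\pi$; this places us in the regime where the {\sc csc} connector is determined uniquely by the endpoint data. I would then run a first-variation / exchange argument: if an admissible $\gamma$ has $\theta'$ not almost-everywhere equal to $\{-1,0,+1\}$ with the switching pattern of $\sigma$, one can strictly decrease $\ell$ while preserving the three constraints by pushing the curvature towards $\pm 1$ on the turning portions and towards $0$ on the straight portion. This is the Pontryagin bang--bang characterisation specialised to the Dubins control system; the shortness of the fragment rules out the competing {\sc ccc} extremals, whose middle arc would have to exceed length $\pi$, leaving $\sigma$ as the only candidate. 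Hence $\mathcal L(\sigma)\le\mathcal L(\gamma)$.

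For the equality case, suppose $\mathcal L(\sigma)=\mathcal L(\gamma)$. Then $\gamma$ is itself a minimiser, so by the same extremal analysis its curvature is bang--bang with the switching structure of $\sigma$; matching the fixed initial and final tangents together with the displacement $y-x$ then pins down the switch times and the unit arc radii uniquely, forcing $\theta=\theta_\sigma$ and therefore $\gamma=\sigma$. By Proposition \ref{homotopyfragment} the two paths already share a homotopy class, which is consistent with this rigidity. Equivalently, this can be phrased through the projection of Proposition \ref{homotopyfragment}: the homotopy carrying $\gamma$ onto $\sigma$ is a nearest-point projection, which is length-nonincreasing and restricts to an isometry onto its image exactly when $\gamma$ already lies on $\sigma$.

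The main obstacle is the minimality/exchange step. Making the variational comparison rigorous for paths that are only $C^1$ and piecewise $C^2$, and possibly self-touching as short fragments may be, requires care: one must justify the bang--bang reduction in the class of measurable curvature controls, verify that no abnormal extremals intrude, and confirm that the shortness hypothesis genuinely excludes the {\sc ccc} competitor. The equality analysis inherits the same delicacy, since it rests on the uniqueness of the minimising control for the given endpoint data rather than merely on its existence.
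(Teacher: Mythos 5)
Your plan has a genuine gap at its load-bearing step, and it is also structurally backwards relative to this paper's architecture. First, even granting the full Pontryagin bang--bang analysis, what you would obtain is that \emph{some} {\sc csc} (or {\sc ccc}) path is the global length minimiser for the fragment's endpoint data; the lemma, however, is about the \emph{specific} replacement path produced by Proposition \ref{construct}. You bridge this by asserting that ``the {\sc csc} connector is determined uniquely by the endpoint data'' in the short-fragment regime, but that is false as stated: for given elements of $T\mathbb{R}^2$ there are in general several {\sc csc} paths ({\sc lsl}, {\sc rsr}, {\sc lsr}, {\sc rsl}) joining them, even when the endpoints are close, and Proposition \ref{construct} claims only existence of one with arcs of length less than $\pi$, not uniqueness. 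Without identifying the replacement path as the minimiser, $\mathcal L(\sigma)\le\mathcal L(\gamma)$ does not follow. Second, the exchange/first-variation step is not carried out but merely described, and the difficulties you yourself flag (measurable controls, abnormal extremals, excluding the {\sc ccc} competitor) are exactly the content of Dubins' theorem (Theorem \ref{embdub}) --- which in this paper is a \emph{downstream consequence} of Lemma \ref{fundlemma}, via the normalisation Theorem \ref{homotarg} and the reduction process. Importing the Dubins/PMP classification to prove the lemma therefore inverts the logical order the lemma exists to support. The equality case inherits the same problem: ``matching the endpoint data pins down the switch times uniquely'' is asserted, not proved, and distinct equal-length minimisers do occur for symmetric endpoint configurations, so the shortness hypothesis would have to be used in an essential way that your sketch never makes explicit.

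The proof in \cite{papera} is entirely elementary and avoids all of this. One cuts the fragment at its crossings of the two lines orthogonal to the straight segment of the replacement path at the tangent points, and compares piece by piece: the two end pieces are bounded below by the lengths of the corresponding circular arcs using the polar-coordinate estimate of Lemma \ref{rad}, applied with origin at the centre of the relevant adjacent circle (this is precisely what the caption of Figure \ref{figfunlem} indicates: the fragment between {\sc x} and $L_1$ is at least as long as the dashed unit arc), while the middle piece is bounded below by the segment length via orthogonal projection onto the segment direction, which is $1$-Lipschitz. The equality case then comes for free: equality in Lemma \ref{rad} forces $r(t)\equiv 1$, so the end pieces lie on the adjacent circles, and equality in the projection bound forces the middle piece to be the straight segment, whence $\gamma=\sigma$. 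This yields both the inequality and the rigidity simultaneously, with no variational machinery, no uniqueness assumptions, and no appeal to the classification of global minimisers.
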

 
By applying Proposition \ref{homotopyfragment}  and Lemma \ref{fundlemma} we proved in \cite{paperd} the following result.

 \begin{theorem}\label{bhcs}(Theorem 3.7 in \cite{paperd}). A bounded curvature path is bounded-homotopic to a cs path.
\end{theorem}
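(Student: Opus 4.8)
The plan is to prove Theorem \ref{bhcs} by combining the fragmentation procedure with the local homotopy result, assembling local homotopies into a single global bounded curvature homotopy. Given an arbitrary bounded curvature path $\gamma:[0,s]\to\mathbb{R}^2$ in $\Gamma(\mbox{\sc x,y})$, I would first invoke the notion of fragmentation: choose a partition $0=t_0<t_1<\cdots<t_m=s$ with $\mathcal{L}(\gamma,t_{i-1},t_i)<1$ for each $i$. The existence of such a partition is immediate by continuity of arc length, and by taking the mesh fine enough each restriction $\gamma|_{[t_{i-1},t_i]}$ is a \emph{sufficiently small} fragment in the sense defined above, so that Proposition \ref{homotopyfragment} applies to each piece individually.

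The core of the argument is then to replace each fragment by its associated {\sc csc} replacement path (whose existence is guaranteed by Proposition \ref{construct}) and to show that the resulting concatenation is not merely \emph{close} to $\gamma$ but genuinely \emph{bounded-homotopic} to it. For a single fragment, Proposition \ref{homotopyfragment} already supplies a bounded curvature homotopy to its replacement path; the homotopy is realised as the projection of the fragment onto the replacement path described in the excerpt. My plan is to perform these replacements \emph{one fragment at a time}, leaving the other fragments fixed. At the $i$-th stage I deform only the piece on $[t_{i-1},t_i]$ from $\gamma$ to its replacement, keeping the rest of the path unchanged. Since the fragment and its replacement share the same endpoints in $T\mathbb{R}^2$ (the interior endpoint tangent directions must be matched so that the $C^1$ condition is preserved at the junctions $t_i$), each such partial deformation is a bona fide bounded curvature homotopy of the whole path. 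Composing these $m$ homotopies in sequence (concatenating the one-parameter families) yields a single bounded curvature homotopy from $\gamma$ to a concatenation of {\sc csc} paths, which is itself a $cs$ path. The concatenation of $cs$ pieces is again a $cs$ path by definition, so the endpoint of the homotopy lies in the required class.

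The step I expect to be the main obstacle is ensuring that the junctions between consecutive replaced fragments remain $C^1$ throughout the homotopy, so that each intermediate object is a legitimate element of $\Gamma(\mbox{\sc x,y})$ rather than merely piecewise smooth with corners. The replacement path for a fragment matches the fragment's endpoints \emph{and} tangent vectors, so at the start and end of each individual stage the junction is automatically $C^1$; but one must verify that the projection homotopy of Proposition \ref{homotopyfragment} keeps the moving endpoint tangent of the $i$-th fragment pinned to the fixed tangent of the adjacent, already-settled piece for every value of the homotopy parameter. This is exactly where the ``sufficiently small'' hypothesis does the work: the smallness guarantees the projection is well-defined and tangent-preserving at the shared boundary points, so no corner is ever created. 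A secondary technical point is to confirm that the concatenated homotopy is itself continuous as a one-parameter family (continuity at the parameter values where we switch from deforming fragment $i$ to fragment $i+1$), which follows since both the preceding family terminates and the succeeding family begins at the same intermediate path.

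With these verifications in place, the equivalence-relation structure of $\sim$ (transitivity, recorded in the remark on homotopy classes) lets us chain the $m$ stages into one bounded-homotopy, establishing that $\gamma$ is bounded-homotopic to a $cs$ path and completing the proof.
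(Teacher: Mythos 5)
Your proof follows essentially the same route as the paper's: the theorem is obtained (in \cite{paperd}, as the paper indicates) by fragmenting the path and applying Proposition \ref{homotopyfragment} to each sufficiently small fragment, which is precisely your fragment-by-fragment replacement with the local homotopies concatenated into one. The junction issue you flag as the main obstacle is in fact already settled by Definition \ref{hom_adm}: every intermediate path in a fragment's homotopy is an element of $\Gamma$ of that fragment's endpoints, so positions and tangents at the junctions stay pinned for all parameter values.
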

{ \begin{figure} [[htbp]
 \begin{center}
\includegraphics[width=.9\textwidth,angle=0]{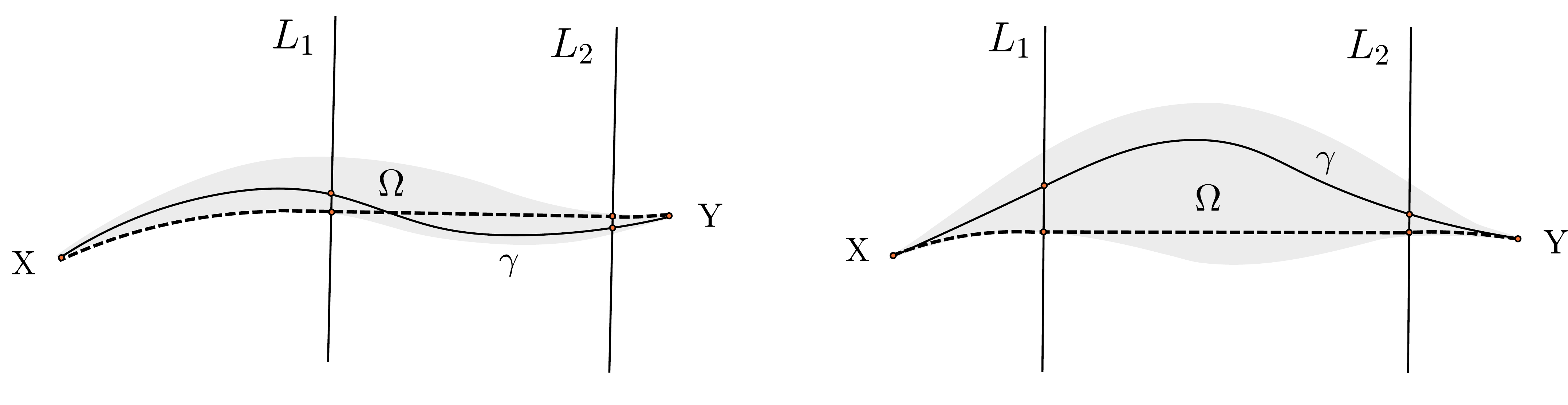}
\end{center}
\caption{Under condition {\sc D} embedded bounded curvature paths in $\Omega$ are not bounded-homotopic to paths having a point in the complement of $\Omega$. The dashed path corresponds to the {\sc csc} path bounded-homotopic to the fragment $\gamma$. For Lemma \ref{rad}: Note that $\gamma$ in between {\sc x} and $L_1$
 is longer than the dashed arc of a unit circle in between {\sc x} and $L_1$.}
 \label{figfunlem}
\end{figure}}

Here we put together Theorem \ref{bhcs} and Lemma \ref{fundlemma} to prove the following result.
 
  \begin{theorem} (Normalisation)\label{homotarg} A bounded curvature path $\gamma$ is bounded-homotopic to a cs path of length at most the length of $\gamma$.
\end{theorem}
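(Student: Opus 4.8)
The plan is to reprove Theorem \ref{bhcs} by exactly the fragment-and-replace construction, but this time carrying along the length estimate of Lemma \ref{fundlemma} at each step. First I would choose a fragmentation $0=t_0<t_1<\cdots<t_m=s$ of $\gamma$ that is fine enough that every fragment $\gamma_i:=\gamma|_{[t_{i-1},t_i]}$ is \emph{sufficiently small} in the sense fixed above; this is possible since $\gamma$ has length $s$ and satisfies $\mathcal{L}(\gamma,t_{i-1},t_i)<1$ on each piece, so the hypothesis of Proposition \ref{homotopyfragment} applies to each $\gamma_i$. For each $i$ let $\sigma_i$ denote the replacement path of $\gamma_i$, which by Proposition \ref{construct} is a {\sc csc} path sharing the endpoints of $\gamma_i$ in $T{\mathbb R}^2$ (both the positions $\gamma(t_{i-1}),\gamma(t_i)$ and the tangent directions $\gamma'(t_{i-1}),\gamma'(t_i)$).

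Next I would homotope the fragments one at a time. By Proposition \ref{homotopyfragment} each $\gamma_i$ is bounded-homotopic to $\sigma_i$ via a homotopy that fixes the two endpoints of the fragment in $T{\mathbb R}^2$. I would extend this fragment homotopy to a homotopy of the whole path by leaving $\gamma$ unchanged on the complementary intervals $[0,t_{i-1}]$ and $[t_i,s]$. Because the fragment homotopy fixes the endpoint tangent directions, the two junction points at $t_{i-1}$ and $t_i$ remain $C^1$ throughout the deformation, and the curvature bound continues to hold piecewise; hence each extended map is an admissible bounded curvature homotopy of the full path, fixing {\sc x} and {\sc y}. Composing these deformations by transitivity of $\sim$ (the equivalence relation of the Remark on homotopy classes) carries $\gamma$ to the concatenation $\sigma:=\sigma_1\ast\sigma_2\ast\cdots\ast\sigma_m$. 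Since consecutive replacement paths meet with matching tangent directions inherited from the $C^1$ path $\gamma$, the concatenation $\sigma$ is a genuine $C^1$ path, and being a finite string of arcs and segments it is a cs path.

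Finally I would bound the length. Length is additive over the fragmentation, so
\begin{equation}
\mathcal{L}(\sigma)=\sum_{i=1}^m \mathcal{L}(\sigma_i)\le \sum_{i=1}^m \mathcal{L}(\gamma,t_{i-1},t_i)=\mathcal{L}(\gamma),
\end{equation}
where the inequality is Lemma \ref{fundlemma} applied to each fragment and its replacement. This delivers a cs path $\sigma$ with $\gamma\sim\sigma$ and $\mathcal{L}(\sigma)\le\mathcal{L}(\gamma)$, as required. The one step demanding care — and the main obstacle — is verifying that extending a single fragment homotopy by the identity on the neighbouring arcs yields a bona fide bounded curvature homotopy of the entire path: one must check that $C^1$ regularity and the curvature bound survive at the two junction points at every parameter value. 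This is precisely where fixing the endpoint tangents in Proposition \ref{homotopyfragment} is essential; everything else is the length bookkeeping above.
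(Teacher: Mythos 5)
Your proof is correct and takes essentially the same route as the paper: the paper's proof simply cites Theorem \ref{bhcs} (whose content is exactly your fragment-by-fragment replacement via Proposition \ref{homotopyfragment}) and then applies Lemma \ref{fundlemma} between consecutive elements of the fragmentation, which is your summation estimate $\mathcal{L}(\sigma)=\sum_i\mathcal{L}(\sigma_i)\le\sum_i\mathcal{L}(\gamma,t_{i-1},t_i)=\mathcal{L}(\gamma)$. The only difference is that you unfold the proof of Theorem \ref{bhcs} in detail, including the $C^1$ junction-point check, rather than invoking it as a black box.
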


\begin{proof} Consider a bounded curvature path with a fragmentation with sufficiently small fragments. By applying Theorem \ref{bhcs} we obtain a homotopy between $\gamma$ and a $cs$ path. The result follows by applying Lemma \ref{fundlemma} in between consecutive elements in the fragmentation. \end{proof}
 
Given a bounded curvature path we analyse its normalisation by pieces. These pieces (or {\it components}) are obtained by concatenating {\sc csc} paths. We denote by a component of type ${\mathscr C}_1$ a path of type {\sc cscsc} shown in Figure \ref{figrep1} left and centre. A component of type ${\mathscr C}_2$ is a path of type {\sc csccsc} shown in Figure \ref{figrep1} right. A component of type ${\mathscr C}_3$ contains a loop and will be defined later in this work. We have the following result.

\begin{theorem}(Theorem 3.4 in \cite{papera}) \label{scsccsnomin}Components of type $\mathscr{C}_1$ and $\mathscr{C}_2$ are not paths of minimal length.
\end{theorem}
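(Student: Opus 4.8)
The plan is to show that any $\mathscr{C}_1$ or $\mathscr{C}_2$ component admits a length–strictly–decreasing bounded curvature homotopy that fixes its two endpoints together with their tangent directions and stays inside the same homotopy class. Grafting such a deformation into any ambient path leaves the outer arcs untouched and produces a strictly shorter path in the same connected component, so a component carrying a $\mathscr{C}_1$ or $\mathscr{C}_2$ piece cannot be a length minimiser.

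First I would freeze the endpoints and tangent directions of the component. For a $\mathscr{C}_1=$\textsc{cscsc} path this pins down the two outer circles $O_1,O_3$ carrying the first and last arcs, since a unit arc through a prescribed point with a prescribed tangent and handedness lies on a unique unit circle. What remains free is exactly the centre $o_2$ of the middle circle $O_2$, so the length becomes a function $L(o_2)$ of a single point of the plane, the two line segments being the relevant common tangents of $(O_1,O_2)$ and $(O_2,O_3)$ and the three arcs adjusting so as to keep the path $C^1$. The whole family $\{L(o_2)\}$ is a bounded curvature homotopy in the original class, because every intermediate configuration is a genuine \textsc{cs} path with the same endpoints, tangents and turning number.

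The key step is to evaluate $L(o_2)$ in the symmetric (same–handedness) case, where the segments are \emph{external} common tangents of equal unit circles, so $\abs{S_1}=\abs{o_1-o_2}$, $\abs{S_2}=\abs{o_2-o_3}$, and the tangent direction along $S_1$ (resp. $S_2$) is that of $o_1o_2$ (resp. $o_2o_3$). Since the initial and final tangent directions and the turning number are constant inside a homotopy class, the signed total turning $A_1+A_2+A_3$ is independent of $o_2$. Hence $L(o_2)=\mathrm{const}+\abs{o_1-o_2}+\abs{o_2-o_3}$, a convex Fermat-type functional whose minimum set is the segment $\overline{o_1o_3}$; for any $o_2\notin\overline{o_1o_3}$ the path is a genuine \textsc{cscsc} and moving $o_2$ toward $\overline{o_1o_3}$ strictly shortens it, while on the segment one has $A_2=0$, i.e. $S_1,S_2$ become collinear and the middle arc collapses, so the path degenerates to a \textsc{csc} of complexity three. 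For $\mathscr{C}_2=$\textsc{csccsc} the same scheme applies to the middle \textsc{cc}-block: with $O_1,O_4$ fixed, the two mutually tangent unit circles form a rigid pair, and rolling this pair along the analogous descent direction again strictly decreases $\abs{S_1}+\abs{S_2}$ until a segment or an arc collapses and the complexity drops below six.

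The main obstacle is the skew (mixed–handedness) patterns — \textsc{lsrsl}, \textsc{rslsr} and their relatives — where the relevant segments are \emph{internal} common tangents of length $\sqrt{d^2-4}$ and the arc lengths no longer add up to the signed total turning, so the clean ``constant turning'' reduction is unavailable. There I would instead compute the first variation $\nabla_{o_2}L$ directly and verify that it cannot vanish while the configuration is non-degenerate, and also check that the descent reaches a degeneracy inside the admissible region $d\geq 2$ rather than stalling at the internal-tangency boundary. Discharging these finitely many sign patterns, together with the corresponding rolling analysis for the \textsc{cc}-block of $\mathscr{C}_2$, is the technical heart of the proof; the grafting and continuity parts are the soft steps outlined above.
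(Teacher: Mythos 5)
You should note first that this paper does not actually prove the statement: it is imported verbatim as Theorem 3.4 of \cite{papera}, where the non-minimality of \textsc{cscsc} and \textsc{csccsc} components is established through the replacement/perturbation machinery (the analogues of Propositions 3.1--3.3 there), arguing case by case on the geometric configuration rather than through a single variational functional. Your reduction in the same-handedness case is nevertheless correct and attractive: with the outer circles pinned by the endpoint data, the segment lengths of the external tangents equal $\abs{o_1-o_2}$ and $\abs{o_2-o_3}$, constancy of the total turning inside the class freezes $A_1+A_2+A_3$, and $L(o_2)=\mathrm{const}+\abs{o_1-o_2}+\abs{o_2-o_3}$ is strictly decreased by moving $o_2$ toward the segment $\overline{o_1o_3}$, at which point the middle arc collapses. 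One wrinkle you should still patch even there: the ``constant'' is only locally constant, since each arc angle is determined modulo $2\pi$ and $L$ jumps by $2\pi$ whenever an angle passes through $0$ during the motion of $o_2$; you must observe that the first such crossing already produces a degenerate, strictly shorter, lower-complexity path (the descent up to that time is continuous and strictly decreasing), or else the global convexity claim is unjustified. This matters because a middle arc with angle $2\pi$ (a loop) cannot be removed within the homotopy class, which is exactly how the degenerate $\mathscr{C}_3$ components of the present paper arise.

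The genuine gap is that the skew (mixed-handedness) patterns and the $\mathscr{C}_2$ case are deferred rather than proven. Saying you would ``compute the first variation $\nabla_{o_2}L$ directly and verify that it cannot vanish'' and would ``check that the descent reaches a degeneracy inside the admissible region $d\geq 2$'' is a statement of intent, and those are precisely the hard configurations: the internal common tangents contribute $\sqrt{d^2-4}$, the arc angles enter the turning with mixed signs so no additive constant splits off, and the constraint boundary $d=2$ is a real obstruction where a naive descent can stall (it is also where self-intersections and kinks appear, the phenomena this paper's Section 4 is built to handle). Likewise, for \textsc{csccsc} the tangent pair of middle circles has a three-parameter configuration space, not a rigid one-parameter ``rolling'' motion, so the claimed strict decrease of $\abs{S_1}+\abs{S_2}$ needs an actual computation over that family. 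As written, your argument establishes the theorem only for same-handedness $\mathscr{C}_1$ components; the remaining cases --- which are the technical heart by your own admission --- are exactly what the cited proof in \cite{papera} supplies and what your proposal leaves open.
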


A component is called {\it admissible} if it satisfies proximity condition {\sc A}, {\sc B} or {\sc D}. A {\it non-admissible} component satisfy proximity condition {\sc C}. A component is said to be {\it degenerate} if one of their sub arcs or line segments have length zero. The {\it reduction process} consist on continuously deform the components in a $cs$ paths to a $cs$ path with less complexity without increasing the length at any stage of the deformation see Figure \ref{figrep1}.  Our main result, Theorem \ref{singudub}, generalises the following well known result that characterises the length minimisers in $\Gamma(\mbox{\sc x,y})$.  
{ \begin{figure} [[htbp]
 \begin{center}
\includegraphics[width=1\textwidth,angle=0]{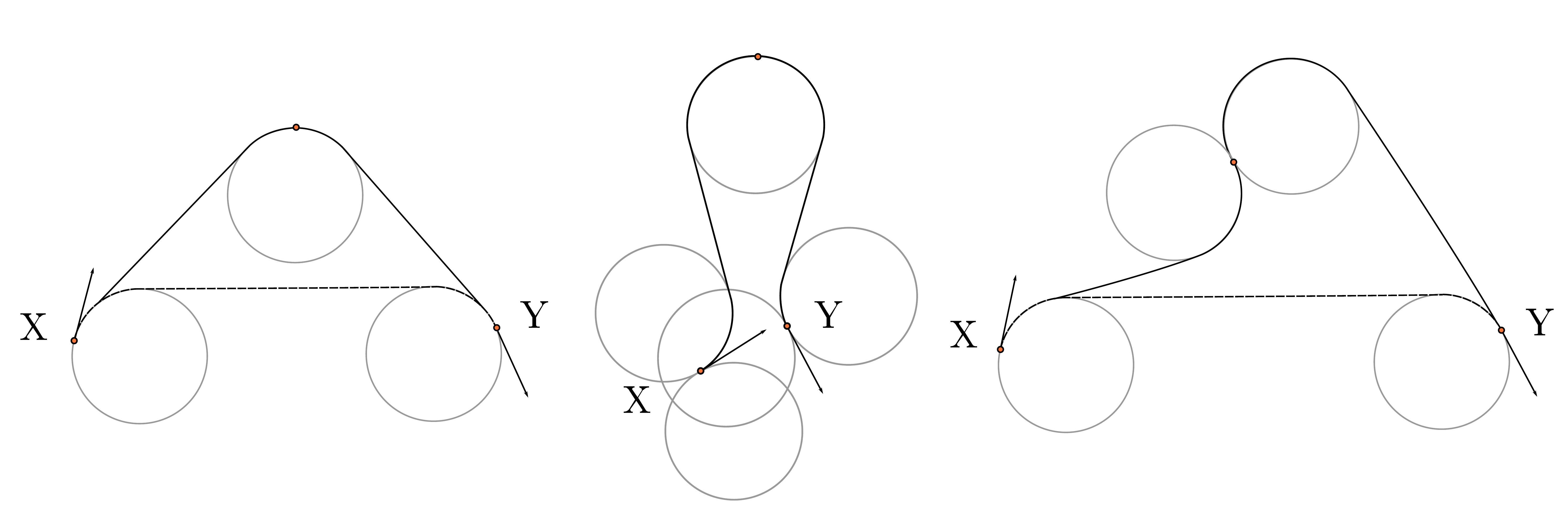}
\end{center}
\caption{Examples of components of type ${\mathscr C}_1$ and ${\mathscr C}_2$. The dashed trace at the left and right illustration represent {\sc csc} paths. The middle illustration correspond to a non-admissible component of type ${\mathscr C}_1$.}
 \label{figrep1}
\end{figure}}

\begin{theorem} \label{embdub} (Dubins \cite{dubins 1}) (Theorem 3.9. in  \cite{papera}) Choose $\mbox{\sc x,y} \in T{\mathbb R}^2$. The minimal length bounded curvature path in $\Gamma(\mbox{\sc x,y})$ is either a {\sc ccc} path having its middle component of length greater than $\pi $ or a {\sc csc} path where some of the circular arcs or line segments can have zero length.
\end{theorem}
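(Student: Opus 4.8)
The plan is to turn an arbitrary bounded curvature path into a short, combinatorially simple one in two stages: first normalise, then reduce complexity. By Proposition \ref{construct} together with the Normalisation Theorem \ref{homotarg}, a minimiser---whose existence is classical, following from a compactness argument applied to a minimising sequence in $\Gamma(\mbox{\sc x,y})$---is bounded-homotopic, at no greater length, to a $cs$ path that is a finite concatenation of {\sc csc} components, one per fragment. Since a closed loop of absolute curvature bounded by $1$ has length at least $2\pi$ while an explicit loop-free {\sc csc} competitor always exists, a global minimiser can carry no loop; this removes components of type ${\mathscr C}_3$ from the discussion and leaves a genuine concatenation of {\sc csc} blocks.

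Next I would run the reduction process on this concatenation. The key observation is that any two adjacent {\sc csc} components overlap, at their shared junction, to form a subpath of type ${\mathscr C}_1$ (a {\sc cscsc} path, when the two abutting arcs have the same orientation and merge) or of type ${\mathscr C}_2$ (a {\sc csccsc} path, when they turn oppositely and do not merge). By Theorem \ref{scsccsnomin} neither ${\mathscr C}_1$ nor ${\mathscr C}_2$ is of minimal length, so each such pair can be deformed inside $\Gamma(\mbox{\sc x,y})$, without increasing length and keeping the remainder of the path fixed, into a single component of strictly smaller complexity---either a shorter {\sc csc} or a {\sc ccc}. A minimiser therefore cannot contain two adjacent components; and since the number of components is a positive integer that strictly decreases under each reduction, the process halts after finitely many steps at a single irreducible unit.

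It then remains to classify the irreducible unit and to pin down the length constraint in the {\sc ccc} case. Up to relabelling and allowing sub-arcs or segments of length zero, the surviving unit is either a {\sc csc} path or a {\sc ccc} path, which are exactly the two families in the statement. To finish the {\sc ccc} case I would establish, by a direct geometric estimate, that a {\sc ccc} path whose middle arc has length at most $\pi$ is never optimal: sliding the two outer arcs toward one another strictly shortens the path, and in the limiting configuration the path is dominated by a genuine {\sc csc} path. Hence a {\sc ccc} minimiser must have its middle arc longer than $\pi$, and the degeneracy allowances absorb all the boundary configurations.

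I expect the main obstacle to lie in the two pieces of genuine geometry rather than in the bookkeeping. The first is the substance of Theorem \ref{scsccsnomin}, namely that a ${\mathscr C}_1$ or ${\mathscr C}_2$ component can always be shortened while respecting the endpoint and curvature constraints; invoking it inside a longer path also requires checking that the neighbouring arcs and segments do not obstruct the deformation and that the reduced path remains in $\Gamma(\mbox{\sc x,y})$. The second is the monotonicity computation proving that a {\sc ccc} path with a short middle arc is suboptimal, which is what fixes the threshold $\pi$ and genuinely separates the two admissible families; the remaining verifications, that the iterated reductions stay admissible and terminate, are then routine because complexity is integer-valued and strictly decreasing.
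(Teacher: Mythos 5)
Your overall architecture---normalise via Theorem \ref{homotarg}, then reduce adjacent {\sc csc} blocks through Theorem \ref{scsccsnomin} until a single {\sc csc} or {\sc ccc} unit survives, then rule out {\sc ccc} with middle arc at most $\pi$ by a monotonicity estimate---is essentially the strategy of \cite{papera} that this paper summarises. But there is a genuine gap at the step where you dispose of loops. You argue: a loop costs at least $2\pi$ (Corollary \ref{looplength}) and ``an explicit loop-free {\sc csc} competitor always exists,'' hence a global minimiser carries no loop. The inference is a non sequitur: the existence of a loop-free competitor tells you nothing unless that competitor is \emph{shorter} than the looped path, and for proximity condition {\sc C} endpoints it need not be. Worse, the conclusion is false in exactly the regime this paper is built around: the introduction stresses that self-intersecting paths may be global length minimisers, and Figure \ref{figrepclosesingtwo} (bottom) exhibits an endpoint condition whose global minimum of length is a {\sc ccc} path with $\chi=1$---a path containing a loop of length at least $2\pi$, with every embedded competitor longer. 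Note also that your ``no loop'' lemma, if true, would contradict your own final classification, since a {\sc ccc} minimiser with middle arc greater than $\pi$ can cross itself. This is precisely why the paper cannot discard components of type ${\mathscr C}_3$ by fiat and instead routes the argument through Proposition \ref{kink} (kinks are homotoped to ${\mathscr C}_3$ components), the sliding of the resulting unit loops along the path to an endpoint, cancellation of oppositely oriented loops into $\mbox{\sc c}^{\chi}$, and Proposition \ref{c3nomin} for the non-degenerate ${\mathscr C}_3$ case, as in the proof of Theorem \ref{singudub}; only after that bookkeeping does Dubins' dichotomy drop out, with the self-intersecting minimisers appearing as the $\chi\geq 1$ cases rather than being excluded.

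A secondary, smaller imprecision: you claim each adjacent pair of {\sc csc} blocks ``can be deformed inside $\Gamma(\mbox{\sc x,y})$, without increasing length \ldots into a single component of strictly smaller complexity.'' The deformation statement is Proposition \ref{lengthred}, which requires the component to be \emph{admissible} (proximity condition {\sc A}, {\sc B} or {\sc D}); under condition {\sc C} the complexity-lowering homotopy can fail and the non-admissible case is treated separately. For the global minimiser the bare non-minimality assertion of Theorem \ref{scsccsnomin} suffices (one does not need to stay in a homotopy class), so this can be repaired by weakening your claim from ``deformable'' to ``not minimal''---but as written it overstates what the cited results give you. The existence-by-compactness remark and the termination argument via integer-valued complexity are fine.
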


The paths in Theorem \ref{embdub} are called {\it Dubins paths} in honour to Lester Dubins who showed for the first time Theorem \ref{embdub} in 1957 in \cite{dubins 1}.

 \section{Reducing paths with loops} \label{windingnumberofpaths}

It is not hard to see that the paths $\gamma_1$ and $\gamma_2$ are not bounded-homotopic (see Figure \ref{figccproxcond1}). In order to describe why the previous idea it is true we introduce the following definition. Consider the exponential map $\exp: {\mathbb R} \rightarrow {\mathbb S}^1$.

\begin{definition} The {\it turning map} $\tau$ is defined in the following diagram,
\[ \xymatrix{ I  \ar[d]_{\tau}   \ar@{>}[dr]^{w} &  \\
                     {\mathbb R} \ar[r]_{\exp}  & {\mathbb S}^1} \]
The map $w:I \rightarrow { \mathbb S}^1$ is called the {\it direction map} and gives the derivative $\gamma'(t)$ of the path $\gamma$ at $t\in I$. The turning map $\tau:I\rightarrow {\mathbb R}$ gives the turning angle $\gamma'(t)$ makes with respect to the $x$-axis.
\end{definition}

Observe that the turning maps of two elements in $\Gamma(\mbox{\sc x,y})$ must differ by an integer multiple called the {\it turning number} denoted by $\tau(\gamma)$. In addition, note there is a one to one correspondence between homotopy classes and these multiples.

  \begin{definition} Given $\mbox{\sc x,y}\in T{\mathbb R}^2$. The space of bounded curvature paths satisfying the given endpoint condition having turning number $n$ is denoted by
$${\Gamma}(n)=\{\gamma \in {\Gamma}(\mbox{\sc x,y}) \,|\,\,\, \tau(\gamma)=n,\,n\in {\mathbb Z}\}.$$
\end{definition}

By virtue of the Graustein-Whitney theorem in \cite{whitney} we have the following.

\begin{corollary} \label{wcpm} Given $\mbox{\sc x,y}\in T{\mathbb R}^2$ then ${\Gamma}(m)\cap {\Gamma}(n)=\emptyset$ for $m\neq n$.
\end{corollary}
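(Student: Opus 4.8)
The plan is to show that the turning number is a genuinely single-valued integer invariant attached to each path, after which the disjointness is a formality: a path cannot carry two distinct turning numbers at once. So the whole content lies in making $\tau(\gamma)$ a well-defined \emph{function} on $\Gamma(\mbox{\sc x,y})$, and the Graustein-Whitney theorem is what certifies that the resulting partition is the geometrically correct one.

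First I would pin down the lift. Since $\gamma$ is $C^1$ and parametrised by arc length, the direction map $w: I \rightarrow {\mathbb S}^1$, $w(t)=\gamma'(t)$, is continuous with $w(0)=X$ and $w(s)=Y$. Because $\exp: {\mathbb R} \rightarrow {\mathbb S}^1$ is a covering map, unique path lifting yields a unique continuous turning map $\tau: I \rightarrow {\mathbb R}$ with $\exp\circ\tau=w$ once the initial value $\tau(0)$ is chosen among the preimages of $X$. Fixing that choice once and for all (legitimate, since every element of $\Gamma(\mbox{\sc x,y})$ starts with direction $X$), the terminal value $\tau(s)$ — a preimage of $Y$ — is determined by $\gamma$ alone. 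Next I would extract the integer: for any two paths the terminal values are both lifts of the common final direction $Y$ and hence differ by an integer multiple of $2\pi$, so, fixing a reference lift $\theta_Y$ of $Y$, the quantity $\tau(\gamma):=(\tau(s)-\theta_Y)/2\pi \in {\mathbb Z}$ is a well-defined integer-valued function on $\Gamma(\mbox{\sc x,y})$. By definition $\Gamma(n)$ is precisely the fibre over $n$, so $\Gamma(m)\cap\Gamma(n)$ is the intersection of two distinct fibres of a single-valued function and is therefore empty whenever $m\neq n$.

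The role of the Graustein-Whitney theorem is then to confirm that this partition refines the decomposition into connected components: a bounded curvature homotopy is in particular a regular homotopy through immersions (Definition \ref{hom_adm}), along which the turning number is invariant, so each homotopy class lies inside a single $\Gamma(n)$; conversely, equality of turning numbers classifies regular homotopy classes, which is exactly the one-to-one correspondence between homotopy classes and integers noted before the statement. The one step that genuinely requires care — and which I regard as the main obstacle — is the well-definedness of $\tau(\gamma)$: that the lift is truly unique, independent of the arc-length parametrisation and of the varying domain $[0,s]$, and stable under the fixed endpoint directions. This rests on unique path lifting for $\exp$ together with the endpoint condition; once it is secured, the disjointness asserted in the corollary follows immediately.
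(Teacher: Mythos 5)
Your proposal is correct, and it takes a genuinely more elementary route than the paper, which offers no argument at all beyond the phrase ``by virtue of the Graustein--Whitney theorem.'' You reduce the corollary to pure covering-space theory: unique path lifting for $\exp:{\mathbb R}\rightarrow {\mathbb S}^1$, with the initial lift fixed by the common starting direction $X$ and a reference lift $\theta_Y$ of $Y$, makes $\tau(\gamma)$ a single-valued integer function on $\Gamma(\mbox{\sc x,y})$, after which $\Gamma(m)\cap\Gamma(n)=\emptyset$ is a tautology about fibres of a function. This correctly exposes that Whitney's theorem is not needed for the disjointness itself --- and you rightly relocate its role to the stronger, surrounding claim that bounded curvature homotopies preserve turning number, so that each homotopy class sits inside a single $\Gamma(n)$. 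Note, though, that even for that claim you only use the easy direction (an integer-valued quantity varying continuously along a regular homotopy is constant); the hard direction of Graustein--Whitney (equal turning number implies regularly homotopic) is what would underlie the ``one-to-one correspondence between homotopy classes and these multiples'' mentioned before the corollary, and that correspondence is in fact \emph{false} in the bounded curvature setting under proximity condition {\sc D}, where a single turning number supports both an embedded class and a non-embedded class (as this paper and \cite{paperc}, \cite{paperd} emphasise). So your last sentence overstates the classification; since the corollary only asserts disjointness, this does not damage your proof, but you should drop or qualify the converse claim.
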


Another theorem of Whitney in \cite{whitney} establishes that any curve can be perturbed to have a finite number of transversal self intersections. In \cite{paperd} we developed a continuity argument guaranteeing that the bound on the curvature is never violated when homotoping (projecting) a small fragment onto the replacement path (see Figure \ref{figfunlem}). The number of transversal self intersections in $\gamma$ can be chosen to be minimal and may be denoted by $\chi$. In order to describe a meaningful reduction process for bounded curvature paths in homotopy classes we need to develop a method for lowering the complexity of {\it components with loops} while the length of such components is never increasing through the process. The Corollary 2.4 in \cite{papere} is of technical nature; it basically proves that a bounded curvature path $\gamma$ making u-turn and $d(\gamma(0),\gamma(s))<1$ must have length at least $2$. The end here is to prove that {\it the shortest closed bounded curvature path is the boundary of a unit disk}.

\begin{proposition}\label{noclor} There are no closed bounded curvature paths lying in the interior of a unit radius disk.
\end{proposition}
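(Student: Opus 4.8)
The plan is to argue by contradiction, using the point of the loop that is farthest from the centre of the disk and comparing the curvature of the path there with that of a circle through the same point. Suppose $\gamma:[0,s]\rightarrow {\mathbb R}^2$ is a closed bounded curvature path contained in the interior of a unit radius disk $D$ with centre $c$, and regard $\gamma$ as defined on the loop ${\mathbb R}/s{\mathbb Z}$ since $\gamma(0)=\gamma(s)$ with matching tangents. First I would introduce the real function $g(t)=\|\gamma(t)-c\|^2$. Because the loop is compact and $\gamma$ is continuous, $g$ attains a maximum at some $t_0$, and since $\gamma$ lies in the \emph{interior} of $D$ the value $r:=\|\gamma(t_0)-c\|$ satisfies $r<1$, so that $1/r>1$. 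Thus the entire curve is trapped in the closed disk of radius $r$ about $c$ and touches its boundary circle at $\gamma(t_0)$.

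The core of the argument is the second-order information at $t_0$. Differentiating and using that $\gamma$ is parametrised by arc length, so $\|\gamma'\|\equiv 1$, gives $g'(t)=2\,\gamma'(t)\cdot(\gamma(t)-c)$ and $g''(t)=2+2\,\gamma''(t)\cdot(\gamma(t)-c)$. At the maximum $t_0$ we have $g'(t_0)=0$, which says $\gamma'(t_0)\perp(\gamma(t_0)-c)$, i.e. $\gamma$ is tangent to the boundary circle there, and $g''(t_0)\leq 0$. The latter forces $\gamma''(t_0)\cdot(\gamma(t_0)-c)\leq -1$, and then the Cauchy--Schwarz inequality yields $\|\gamma''(t_0)\|\,r\geq 1$, that is $\|\gamma''(t_0)\|\geq 1/r>1$. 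This contradicts the curvature bound $\|\gamma''\|\leq 1$ of Definition \ref{adm_pat}, and the proposition follows. Geometrically this is just the statement that a curve remaining inside a circle of radius $r<1$ while tangent to it must bend at least as sharply as that circle, hence with curvature at least $1/r>1$.

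The one genuine technical point, and the step I expect to require the most care, is that $\gamma$ is only $C^1$ and piecewise $C^2$, so $\gamma''(t_0)$ need not exist when $t_0$ is one of the finitely many junction points. I would resolve this by passing to one-sided second derivatives: since $g$ is $C^1$ the condition $g'(t_0)=0$ holds unconditionally, while restricting to either adjoining $C^2$ piece gives $g''(t_0^{\pm})\leq 0$ and hence the same inequality $\|\gamma''(t_0^{\pm})\|\geq 1/r>1$ for the defined one-sided second derivative, whose norm is still bounded by $1$ ``when defined'' in the sense of Definition \ref{adm_pat}. Therefore the contradiction persists regardless of where the maximum of $g$ is attained, completing the proof.
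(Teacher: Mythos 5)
Your proof is correct, but it takes a genuinely different route from the paper. The paper's proof is a one-line reduction to an external result: by Corollary 2.4 of the author's companion paper on plane curves, a closed bounded curvature path must contain two points at distance at least $2$, and a set of diameter at least $2$ cannot fit in the interior of a disk of diameter $2$. Your argument is instead a self-contained maximum principle: at the point $\gamma(t_0)$ farthest from the centre, tangency ($g'(t_0)=0$) plus the second-order condition $g''(t_0)\leq 0$ forces $\|\gamma''(t_0)\|\geq 1/r>1$, contradicting the curvature bound. This buys elementarity and independence from the cited corollary, and it generalises immediately to any bound $\kappa$ and radius $1/\kappa$; what it does \emph{not} give is the stronger intermediate fact the paper extracts (a pair of points at distance $\geq 2$, i.e.\ a diameter bound). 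Your handling of the piecewise-$C^2$ issue via one-sided second derivatives is exactly right, since on each closed $C^2$ piece the one-sided limit $\gamma''(t_0^{\pm})$ exists and inherits the bound $\|\gamma''\|\leq 1$, and Taylor expansion on either side of a $C^1$ critical maximum gives $g''(t_0^{\pm})\leq 0$. Two small points worth making explicit: $r>0$ because $\gamma$ is unit speed, hence nonconstant; and your argument genuinely needs the hypothesis, which you correctly assumed, that the closed path is $C^1$ at the closing point ($\gamma'(0)=\gamma'(s)$) --- otherwise the maximum of $g$ could sit at a corner where no one-sided tangency holds, and indeed a small lens bounded by two arcs of unit circles meeting at corners has curvature $1$ where defined yet lies inside an open unit disk. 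This reading of ``closed'' is consistent with the paper, which treats loops with non-matching tangents separately (Corollary \ref{looplength}, via the generalised Pestov--Ionin lemma of \cite{hee}).
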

\begin{proof} Let  $\gamma$ be a closed bounded curvature path lying in the interior of a unit radius disk. By considering $\gamma(0)$ and $\gamma(s)$ as in Corollary 2.4 in \cite{papere} we conclude that $\gamma$ has a pair of points which are distant at least $2$. So, there exists $t_1,t_2\in I$ such that $d(\gamma(t_1),\gamma(t_2))\geq 2$, implying that the path cannot be contained in the interior of a unit disk.
\end{proof}

The next result gives a trivial lower bound for the length of a bounded curvature path (see Figure \ref{figfunlem}). Consider a curve $\gamma (t)=(r(t)\cos \theta(t), r(t)  \sin \theta(t))$ in polar coordinates.

\begin{lemma}\label{rad} (Lemma 2.5 in \cite{papera}). For any curve $\gamma :[0,s] \rightarrow {\mathbb R}^2$ with $\gamma(0)= (1,0)$, $r(t)\geq 1$, and $\theta(s)=\eta$, one has ${\mathcal L}(\gamma)\geq \eta$.
\end{lemma}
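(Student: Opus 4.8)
The plan is to express the arc length of $\gamma$ directly in polar coordinates, then discard a nonnegative term under the square root and invoke the hypothesis $r(t)\geq 1$ to bound the remaining integrand from below by $\abs{\theta'(t)}$. The swept angle then emerges from the triangle inequality for integrals.

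First I would differentiate $\gamma(t)=(r(t)\cos\theta(t),\,r(t)\sin\theta(t))$ by the product and chain rules and simplify using $\cos^2+\sin^2=1$, obtaining the classical polar speed formula $\|\gamma'(t)\|^2=(r'(t))^2+r(t)^2(\theta'(t))^2$. Hence
\[
{\mathcal L}(\gamma)=\int_0^s \sqrt{(r'(t))^2+r(t)^2(\theta'(t))^2}\,dt.
\]
Next, since $(r'(t))^2\geq 0$, I would drop it from under the radical to get ${\mathcal L}(\gamma)\geq \int_0^s r(t)\abs{\theta'(t)}\,dt$, and then apply $r(t)\geq 1$ to obtain ${\mathcal L}(\gamma)\geq \int_0^s \abs{\theta'(t)}\,dt$. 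Finally, the triangle inequality for integrals yields
\[
\int_0^s \abs{\theta'(t)}\,dt \;\geq\; \left\lvert \int_0^s \theta'(t)\,dt \right\rvert \;=\; \abs{\theta(s)-\theta(0)} \;=\; \abs{\eta},
\]
where $\theta(0)=0$ since $\gamma(0)=(1,0)$. Taking $\eta\geq 0$ to be the angle actually swept gives ${\mathcal L}(\gamma)\geq \eta$, as claimed.

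There is no serious obstacle here; the argument is a short chain of elementary inequalities. The only point meriting a little care is regularity: since the underlying path is merely piecewise $C^2$, one should justify the polar decomposition and the arc-length integral on each smooth piece and sum the contributions, and observe that $\theta$ is well defined and differentiable wherever $r>0$ — which holds throughout because $r\geq 1$. This also guarantees that no singularity at the origin interferes with the polar representation.
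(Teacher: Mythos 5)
Your proof is correct, and it coincides with the standard argument behind Lemma 2.5 of \cite{papera}, which this paper cites without reproducing: write the length in polar coordinates, drop $(r')^2$ under the radical, use $r(t)\geq 1$, and integrate $\theta'$. Your added remark on piecewise regularity and the well-definedness of $\theta$ when $r\geq 1>0$ is a sound (and welcome) tightening of the same approach.
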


\begin{theorem} \label{loopbound} The shortest closed bounded curvature path is the boundary of a unit disk. In particular, closed bounded curvature paths have length at least $2\pi$.
\end{theorem}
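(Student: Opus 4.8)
The plan is to prove the length bound $\mathcal L(\gamma)\geq 2\pi$ for any closed bounded curvature path and then argue that the boundary of a unit disk attains this bound, so that it is a shortest closed path. The curvature bound $\kappa=1$ means that at every point the path can turn no faster than a unit circle; geometrically this forces the path to enclose ``enough area at unit radius'' to close up, which is exactly what should produce the $2\pi$ lower bound. I would first reduce to a statement about how far apart two points on the path must lie, and then integrate an angular displacement.

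First I would invoke Proposition \ref{noclor}: a closed bounded curvature path cannot lie in the interior of a unit disk, and more precisely (from the proof, via Corollary 2.4 in \cite{papere}) there exist parameters $t_1,t_2\in I$ with $d(\gamma(t_1),\gamma(t_2))\geq 2$. The idea is to place a unit circle tangent to $\gamma$ at a suitable point and use the fact that, because $\kappa\leq 1$, the path stays outside the open disk of radius $1$ that is internally tangent at that point; this is precisely the geometric content needed to apply Lemma \ref{rad}. Concretely, I would normalise coordinates so that one of the extremal points sits at $(1,0)$ with the tangent unit circle centred at the origin, giving $r(t)\geq 1$ along $\gamma$, and then track the total turning angle $\theta$ swept by the position vector. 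Since the path is closed and, by Proposition \ref{noclor}, must wind around the enclosed unit circle, the swept angle $\eta$ accumulated by $\theta(t)$ is at least $2\pi$. Lemma \ref{rad} then gives $\mathcal L(\gamma)\geq \eta\geq 2\pi$ directly.

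For the attainment half, I would simply exhibit the boundary of a unit disk: it is a closed $C^1$, piecewise $C^2$ path with constant curvature exactly $1$, hence a bounded curvature path, and its length is $2\pi$. Combined with the lower bound, this shows the unit circle is a length minimiser among all closed bounded curvature paths, which is the assertion of the theorem.

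The main obstacle I anticipate is the step guaranteeing that the total swept angle $\eta$ is at least $2\pi$, i.e.\ that the position vector (relative to the centre of the tangent unit circle) genuinely makes a full revolution. The estimate $d(\gamma(t_1),\gamma(t_2))\geq 2$ and the condition $r(t)\geq 1$ only give that $\gamma$ reaches the antipodal region of the unit circle; upgrading this to a full $2\pi$ of turning requires care, and the cleanest route is probably to use that $\gamma$ is closed together with the curvature constraint to show the path must encircle the tangent disk rather than merely reaching across it. An alternative, possibly more robust, approach would bypass the radial argument entirely and instead integrate the signed curvature: for a closed immersed curve the total curvature $\int_0^s \kappa(t)\,dt$ is a nonzero integer multiple of $2\pi$, and since $|\kappa(t)|\leq 1$ one obtains $\mathcal L(\gamma)=\int_0^s 1\,dt \geq \int_0^s |\kappa(t)|\,dt \geq 2\pi$. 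I would keep this second argument in reserve in case making the ``full revolution'' claim rigorous via Lemma \ref{rad} proves delicate.
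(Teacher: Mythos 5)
Your skeleton matches the paper's (enclosed unit disk, polar coordinates at its centre, Lemma \ref{rad} with $\eta=2\pi$), but the step you yourself flag as the main obstacle is a genuine gap, and it is exactly where the paper needs a nontrivial theorem you don't supply. The claim that, because $\kappa\leq 1$, the path stays outside the open unit disk internally tangent to it at a suitable point is false: the curvature bound is purely local, giving only second-order avoidance near the tangency, and the path is free to travel away and re-enter that disk later. Likewise, $d(\gamma(t_1),\gamma(t_2))\geq 2$ from Proposition \ref{noclor} only says the path ``reaches across'' a diameter; it does not produce a point about which the position vector sweeps a full $2\pi$. The missing ingredient is the Pestov--Ionin lemma \cite{pestov}, which the paper invokes at precisely this juncture: every closed path with curvature bounded by $1$ contains an entire unit disk in its interior component. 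Taking $o$ to be the centre of that disk gives both $r(t)\geq 1$ and, since $o$ lies in the interior component so the closed path winds around it, a swept angle of at least $2\pi$; Lemma \ref{rad} then finishes. This is a global theorem with real content, not something recoverable from a tangent-circle observation.

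Your reserve argument does not rescue the proof either: for a closed immersed curve the total signed curvature is $2\pi$ times the turning number, and the turning number can be zero (a figure-eight), so $\int_0^s \lvert \kappa(t)\rvert\,dt \geq \bigl\lvert \int_0^s \kappa(t)\,dt \bigr\rvert$ can yield nothing at all. Hopf's Umlaufsatz gives turning number $\pm 1$ only for \emph{simple} closed curves; to treat non-simple closed paths you would need to extract a loop, and the piecewise-smooth Umlaufsatz applied to a loop with a corner loses up to $\pi$ of turning to the exterior angle, yielding only length $\geq \pi$. This is why the paper, in the companion statement Corollary \ref{looplength}, again appeals to a generalised Pestov--Ionin lemma \cite{hee} rather than to total curvature. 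The attainment half of your proposal (the unit circle is a closed bounded curvature path of length $2\pi$) is fine, but as it stands the lower bound -- the entire content of the theorem -- is not established by either of your two routes.
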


\begin{proof}  Let $\gamma$ be a closed bounded curvature path. By Proposition \ref{noclor} there are no closed bounded curvature paths lying in the interior of a unit disk. By the Pestov-Ionin Lemma see \cite{pestov}, {\it a closed bounded curvature path contains a unit disk in its interior component}. Denote by $C$ the boundary of such a disk and consider a coordinate system with origin the centre $o$ of $C$. By applying Lemma \ref{rad} to $\gamma$ with respect to $o$ we conclude that the length of every closed bounded curvature path must be at least $2 \pi$ concluding the proof.\end{proof}

 Let $\gamma$ be a bounded curvature path with self intersections. Observe that there are four possible ways for $\gamma$ to intersects itself transversally for the first time (see Figure \ref{figfirstkink}).

{ \begin{figure} [[htbp]
 \begin{center}
\includegraphics[width=1\textwidth,angle=0]{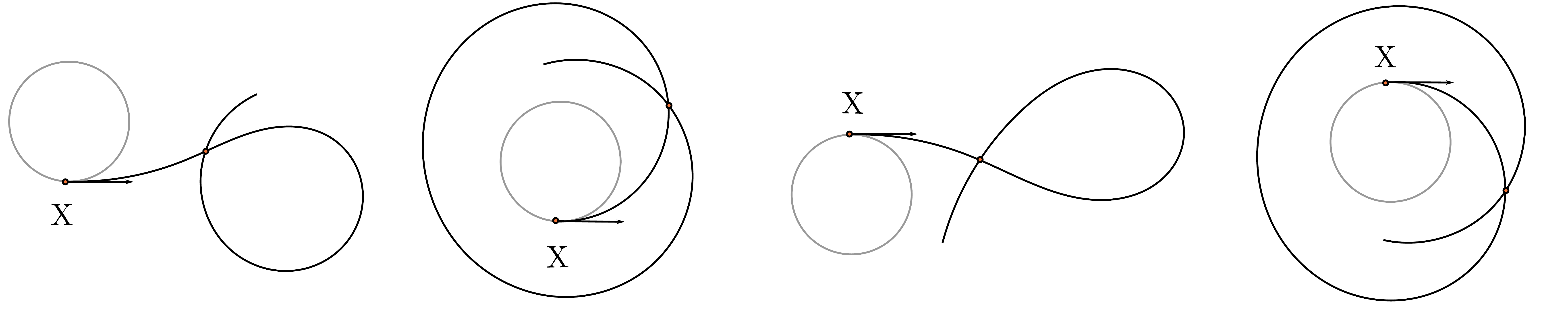}
\end{center}
\caption{The four types of first self intersection.}
 \label{figfirstkink}
\end{figure}}

\begin{definition} Suppose that $\gamma$ intersects itself for first time at $\gamma(t_1)=\gamma(t_2)$ with $t_1<t_2$. The restriction of $\gamma$ to the interval $[t_1,t_2]$ is a {\it loop}. Given $\delta>0$, the restriction of $\gamma$ to the interval $[t_1-\delta,t_2+\delta]$ is a {\it kink}.
\end{definition}

\begin{corollary}\label{looplength} The length of a loop is at least $2 \pi $.
\end{corollary}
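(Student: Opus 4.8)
The plan is to reduce the statement about loops directly to Theorem \ref{loopbound}, which already establishes that every closed bounded curvature path has length at least $2\pi$. The key observation is that a loop, as defined immediately above the corollary, is \emph{almost} a closed bounded curvature path: it is the restriction of $\gamma$ to $[t_1,t_2]$ where $\gamma(t_1)=\gamma(t_2)$, so its two endpoints coincide as points in $\mathbb{R}^2$. The only gap is that closing up the loop into a genuine closed curve in the sense of Theorem \ref{loopbound} requires the tangent directions at the two endpoints to agree, so that the resulting closed path is still $C^1$ (and hence an admissible bounded curvature path to which Theorem \ref{loopbound} applies).

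First I would argue that the tangent directions at $\gamma(t_1)$ and $\gamma(t_2)$ do in fact agree, or else reduce to a configuration where they can be made to agree without decreasing length. Since the self-intersection at $t_1<t_2$ is \emph{transversal} (the paper invokes Whitney's theorem to assume transversal self-intersections, and Figure \ref{figfirstkink} enumerates the four types of first self-intersection), the incoming and outgoing tangents generically differ. So the honest move is to recognise the loop $\gamma|_{[t_1,t_2]}$ as a curve that traverses a full turn: its direction map $w$ returns to a neighbourhood of its starting direction after enclosing a region. Here I would lean on the turning-number / Graustein--Whitney machinery introduced in Section \ref{windingnumberofpaths}: a transversal loop has a well-defined nonzero turning contribution, and the turning map $\tau$ restricted to $[t_1,t_2]$ changes by at least $2\pi$ in absolute value because the loop bounds a region traversed once around.

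The cleanest route, and the one I would carry out, is to apply the Pestov--Ionin Lemma directly to the loop rather than first closing it up. The loop $\gamma|_{[t_1,t_2]}$, being a bounded curvature path whose endpoints coincide at a transversal crossing, bounds a region; by Pestov--Ionin its interior component contains a unit disk. Then, exactly as in the proof of Theorem \ref{loopbound}, I would place a coordinate system at the centre $o$ of that disk and apply Lemma \ref{rad}: every point of the loop lies at distance at least $1$ from $o$ (it lies outside the inscribed unit disk), and the total angular sweep $\theta$ of the loop about $o$ is at least $2\pi$ since the loop encircles $o$ once. Lemma \ref{rad} then yields $\mathcal{L}(\gamma|_{[t_1,t_2]}) \geq 2\pi$, which is precisely the claim.

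The main obstacle I anticipate is verifying that the loop genuinely \emph{encircles} the centre $o$ so that the angular sweep reaches the full $2\pi$ required by Lemma \ref{rad}; equivalently, that the Pestov--Ionin disk is enclosed by the loop in the winding-number-one sense, not merely tangent to or adjacent to it. This is where the transversality of the self-intersection and the turning-number bookkeeping must be invoked carefully, and one must rule out the degenerate possibility that the ``loop'' sweeps less than a full revolution about the disk's centre. Once this encirclement is pinned down, the length estimate is immediate from Lemma \ref{rad}, and the corollary follows as a direct specialisation of the argument proving Theorem \ref{loopbound}.
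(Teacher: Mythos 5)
Your proposal is correct and takes essentially the same route as the paper: apply the Pestov--Ionin lemma directly to the loop to obtain a unit disk in its interior component, then apply Lemma \ref{rad} about the disk's centre to conclude the length is at least $2\pi$. The one refinement in the paper is that it cites a slightly more general version of Pestov--Ionin from \cite{hee} that applies to loops (which need not close up $C^1$ at the transversal crossing) --- precisely the technical gap you flagged before settling on the direct argument.
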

 \begin{proof}  Let $\gamma$ be a loop. By a slightly more general version of Pestov-Ionin Lemma see \cite {hee}, {\it a loop contains a unit disk in its interior component}. Denote by $C$ the boundary of such a disk and consider a coordinate system with origin the centre $o$ of $C$. By applying Lemma \ref{rad} to $\gamma$ with respect to $o$ we conclude that the length of the loop must be at least $2 \pi$.
\end{proof}

\begin{definition} \label{c3comp}A component of type ${\mathscr C}_3$ corresponds to a {\sc cscsc} path with a loop (see Figure \ref{figrep3scsnomin}).  A component of type ${\mathscr C}_3$ is called admissible if it satisfies proximity condition {\rm A, B} or {\rm D}. A component of type ${\mathscr C}_3$ is called non-admissible if it satisfies proximity condition {\rm C}. A component as the one at the centre in Figure \ref{figrep3scsnomin} is called degenerate.
\end{definition}
\vspace{-0.3cm}

{\begin{figure} [[htbp]
\begin{center}
\includegraphics[width=1\textwidth,angle=0]{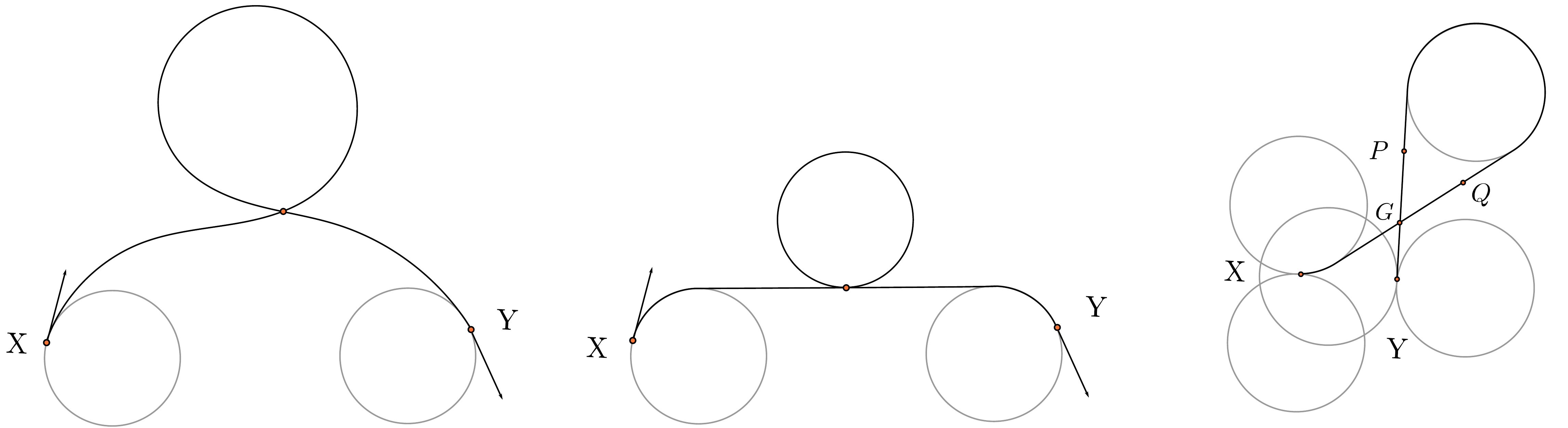}
\end{center}
\caption{Left: A path with a loop. Centre: An admissible degenerate component of type ${\mathscr C}_3$. Right: A  non-admissible component of type ${\mathscr C}_3$.}
\label{figrep3scsnomin}
\end{figure}}

\begin{proposition}\label{lengthred}{\it (Proposition 3.3 in \cite{papera})} Given $\mbox{\sc x,y} \in T{\mathbb R}^2$. A $\it cs$ path in $\Gamma(\mbox{\sc x,y})$ containing an admissible component as a sub path is bounded-homotopic to another $\it cs$ path in $\Gamma(\mbox{\sc x,y})$ with less complexity being the length of the latter at most the length of the former.
\end{proposition}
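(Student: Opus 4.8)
The plan is to prove Proposition~\ref{lengthred} by a case analysis on the type of admissible component appearing as a sub path, and to reduce the general statement to the structural theorems already available. I would begin by recalling that, by Theorem~\ref{scsccsnomin}, components of type $\mathscr{C}_1$ (that is, \textsc{cscsc} paths) and $\mathscr{C}_2$ (that is, \textsc{csccsc} paths) are not of minimal length; this is exactly the engine that lets us shorten them. The content of the present proposition is that this shortening can be realised \emph{through a bounded curvature homotopy} (so that the turning number and hence the homotopy class is preserved), while simultaneously \emph{decreasing complexity}. So the first step is to fix an admissible component $\sigma$ inside the given $cs$ path $\gamma$, isolate its two endpoints together with their tangent directions as a pair $\mbox{\sc x}',\mbox{\sc y}'\in T{\mathbb R}^2$, and observe that since $\sigma$ is admissible it satisfies proximity condition \textsc{A}, \textsc{B} or \textsc{D}. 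This is what guarantees the local sub-problem is \emph{not} in the unbounded-length regime of condition \textsc{C}, so the length minimiser in the relevant homotopy class of $\Gamma(\mbox{\sc x}',\mbox{\sc y}')$ is a genuine Dubins path of the kind described in Theorem~\ref{embdub}.

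\medskip

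Next I would carry out the reduction itself. For a component of type $\mathscr{C}_1$ or $\mathscr{C}_2$, the idea is to deform the excess circular arcs and line segments continuously: one shrinks an intermediate arc or segment while simultaneously adjusting the neighbouring arcs so that the endpoint condition at $\mbox{\sc x}',\mbox{\sc y}'$ is preserved at every parameter value of the deformation. Because $\sigma$ is admissible, this family of paths stays bounded in length and, crucially, can be chosen so the length is monotonically non-increasing; one continues the deformation until one intermediate arc or segment degenerates to zero length, at which point two adjacent pieces merge and the complexity strictly drops (a \textsc{cscsc} becomes a \textsc{csc}, a \textsc{csccsc} becomes a \textsc{cscsc} or lower, etc.). Here I would invoke Theorem~\ref{homotarg} implicitly: the deformed family consists of $cs$ paths, and the bounded curvature homotopy keeps every intermediate path admissible so the curvature bound is never violated. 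Since the deformation is supported on $\sigma$ and fixes $\gamma$ outside a neighbourhood of $\sigma$, it extends to a bounded curvature homotopy of the whole path $\gamma$, producing the desired $cs$ path of lower complexity and length at most $\mathcal{L}(\gamma)$.

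\medskip

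The main obstacle I anticipate is precisely the admissibility bookkeeping: one must verify that throughout the shrinking deformation the intermediate paths never wander into configurations where the curvature bound forces a length increase, i.e.\ that the sub-problem genuinely stays under proximity condition \textsc{A}, \textsc{B} or \textsc{D} and never degenerates into condition \textsc{C}. This is the step where the hypothesis ``admissible'' does all the work, and it is where a naive deformation could fail — shrinking an arc could push the adjacent adjacent-circle configuration across one of the distance thresholds in \eqref{con_a}--\eqref{con_c}. I would address this by choosing the deformation so that the two adjacent circles at $\mbox{\sc x}'$ and $\mbox{\sc y}'$ stay fixed (they depend only on the fixed endpoint data $\mbox{\sc x}',\mbox{\sc y}'$), so the proximity condition of the sub-problem is an invariant of the deformation rather than something that could change. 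With the proximity condition held fixed, the monotone length decrease and the eventual degeneration of an intermediate piece follow from Theorem~\ref{scsccsnomin} together with the explicit length computations for symmetric and skew $\mathscr{C}_1,\mathscr{C}_2$ configurations, completing the reduction.
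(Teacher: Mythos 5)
You should first note that this paper does not actually prove Proposition~\ref{lengthred}: it is imported verbatim as Proposition~3.3 of \cite{papera}, so there is no in-paper argument to compare against, and your proposal must stand on its own. As a standalone proof it has a genuine gap at its centre. You invoke Theorem~\ref{scsccsnomin} as ``the engine,'' but that result is Theorem~3.4 of \cite{papera}, which sits downstream of Proposition~3.3 there and is obtained from exactly the reduction process this proposition supplies --- so the direction of your reasoning is inverted and, relative to the only available source, circular. Even granting it, non-minimality of a $\mathscr{C}_1$ or $\mathscr{C}_2$ component only asserts that \emph{some} shorter path with the same endpoints exists; it yields no bounded curvature homotopy to such a path, no monotone non-increase of length along the deformation, and no control of complexity. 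Your sentence ``can be chosen so the length is monotonically non-increasing'' is precisely the content of the proposition being proved: in \cite{papera} this is established by explicit one-parameter families of $cs$ deformations (translating the middle segment, rotating and shrinking adjacent arcs) with case-by-case length computations over the symmetric and skew configurations, and you defer all of this to ``explicit length computations'' without performing any of them. The proposal is therefore a restatement of the claim with a plan attached, not a proof.

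The obstacle you single out as ``where the hypothesis does all the work'' is in fact vacuous. The deformation fixes the component's endpoints $\mbox{\sc x}',\mbox{\sc y}'$, and the proximity conditions are determined by the adjacent circles at those endpoints, so their invariance under the homotopy is automatic --- there is nothing to arrange. Admissibility earns its keep elsewhere: under condition {\sc C} the complexity-reducing family need not exist with monotone length (this is what the non-admissible component in the middle of Figure~\ref{figrep1} illustrates), and a correct proof must engage with why conditions {\sc A}, {\sc B}, {\sc D} guarantee the deformation can be completed until a sub-arc or segment degenerates; your write-up never touches this. The appeal to Theorem~\ref{homotarg} is also misplaced, since normalisation converts arbitrary bounded curvature paths into $cs$ paths and says nothing about preserving admissibility along a family of $cs$ paths. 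What is sound in your outline --- localising at the component, observing that a continuous family of $cs$ paths automatically respects the curvature bound, and extending the homotopy by the identity outside the component --- is the easy part; the missing monotone deformations are the theorem.
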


\begin{proposition} \label{kink} A kink satisfying proximity condition {\sc A}, {\sc B} or {\sc D} is bounded-homotpic to a component of type ${\mathscr C}_3$ of length at most the length of the kink.
\end{proposition}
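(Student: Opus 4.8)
The plan is to first normalise the kink into a $cs$ path without increasing length, and then to repeatedly reduce its complexity until the only structure that survives is a loop flanked by two $cs$ tails, which is precisely a component of type $\mathscr{C}_3$.

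First I would regard the kink, say $\gamma|_{[t_1-\delta,\,t_2+\delta]}$, as a bounded curvature path between the elements of $T{\mathbb R}^2$ determined by its endpoints and the tangents there. By Theorem \ref{homotarg} this kink is bounded-homotopic to a $cs$ path $\beta$ whose length satisfies ${\mathcal L}(\beta)\le {\mathcal L}(\gamma|_{[t_1-\delta,\,t_2+\delta]})$. Since a bounded curvature homotopy preserves the turning number (Corollary \ref{wcpm}), $\beta$ lies in the same homotopy class as the kink; in particular the self-intersection coming from the loop $\gamma|_{[t_1,t_2]}$ cannot be removed, so $\beta$ still contains a loop. By Corollary \ref{looplength} this loop has length at least $2\pi$, which certifies that the turning contribution of the loop is genuinely present in $\beta$ and cannot be homotoped away within $\Gamma(\mbox{\sc x,y})$.

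Next I would decompose $\beta$ into consecutive $\mathscr{C}_1$, $\mathscr{C}_2$ and $\mathscr{C}_3$ components and apply the reduction process. Because the kink satisfies proximity condition {\sc A}, {\sc B} or {\sc D}, each component met along the way is admissible, so Proposition \ref{lengthred} applies: whenever $\beta$ contains an admissible $\mathscr{C}_1$ or $\mathscr{C}_2$ subpath it is bounded-homotopic to a $cs$ path of strictly smaller complexity and no greater length. The complexity is a positive integer that strictly decreases at each such step, so after finitely many steps the process halts at a $cs$ path $\beta'$ that, apart from the unavoidable loop, admits no further admissible $\mathscr{C}_1$ or $\mathscr{C}_2$ reduction. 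Finally I would identify $\beta'$: outside the loop the path has minimal complexity, so by the Dubins analysis (Theorem \ref{embdub}) the two tails reduce to $cs$ pieces; together with the loop in the middle this yields a $cs$ path of the form {\sc cscsc} carrying a loop, i.e.\ a component of type $\mathscr{C}_3$ in the sense of Definition \ref{c3comp}. As every step above is a bounded curvature homotopy with non-increasing length, $\beta'$ is bounded-homotopic to the original kink and ${\mathcal L}(\beta')\le {\mathcal L}(\gamma|_{[t_1-\delta,\,t_2+\delta]})$, as required.

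The main obstacle I anticipate is the last step: making precise that the reduction terminates exactly at the {\sc cscsc}-with-loop form rather than collapsing the loop or stopping prematurely. The delicate point is that Proposition \ref{lengthred} must be applied only to the admissible components flanking the loop, while verifying that the loop itself is never presented as an admissible $\mathscr{C}_1$ or $\mathscr{C}_2$ configuration, so that reducing it is forbidden both by the turning-number constraint of Corollary \ref{wcpm} and by the length bound of Corollary \ref{looplength}. Handling the interface between each tail and the loop, and ruling out that a reduction silently destroys the self-intersection, is where the careful case analysis of the four first-intersection types of Figure \ref{figfirstkink} will be needed.
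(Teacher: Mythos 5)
Your opening move (normalise the kink, note that the self intersection survives because a bounded curvature homotopy preserves turning number) agrees with the paper, but from there the routes diverge, and yours has a genuine gap exactly where you anticipated one. Your argument needs the iterated application of Proposition \ref{lengthred} to terminate at a path of the specific form {\sc cscsc} with a loop, and nothing you cite delivers this: Proposition \ref{lengthred} only says that an admissible component of type $\mathscr{C}_1$ or $\mathscr{C}_2$ can be reduced; it does not control what configuration remains once no such component is left, nor that the loop survives intact as the middle circle of a {\sc cscsc} configuration rather than being absorbed into some other arrangement of arcs. Your appeal to Theorem \ref{embdub} to identify the tails (``the two tails reduce to {\sc cs} pieces'') is also a misapplication: that theorem characterises global length minimisers in $\Gamma(\mbox{\sc x,y})$, and says nothing about the terminal state of a complexity-reduction process --- indeed Proposition \ref{c3nomin} shows a non-degenerate $\mathscr{C}_3$ component is never a minimiser, so the target of this proposition cannot be reached by an argument that drives the path toward minimality. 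Flagging the termination problem in your final paragraph does not close it.

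The paper avoids the termination problem entirely by a direct construction plus an abstract homotopy argument. It builds the target $\mathscr{C}_3$ component explicitly: take the {\sc csc} replacement path $\beta$ between $\gamma(t_1-\delta)$ and $\gamma(t_2+\delta)$ given by Proposition \ref{construct}, and insert a full unit circle as the loop. The length bound then splits into two separate comparisons: the two tails satisfy ${\mathcal L}(\gamma,t_1-\delta,t_1)+{\mathcal L}(\gamma,t_2,t_2+\delta)>{\mathcal L}(\beta)$ by the argument of Proposition \ref{lengthred}, and the loop satisfies ${\mathcal L}(\gamma,t_1,t_2)\geq 2\pi$ by Corollary \ref{looplength} (which you cite, but use only as a side remark certifying the loop's persistence), giving ${\mathcal L}(\gamma,t_1-\delta,t_2+\delta)\geq {\mathcal L}(\beta)+2\pi$, which is the length of the $\mathscr{C}_3$ component. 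Finally, instead of assembling the bounded curvature homotopy out of reduction steps, the paper gets it abstractly from the classification of homotopy classes: both paths are of piecewise constant curvature, have the same turning number, and satisfy proximity condition {\sc A}, {\sc B} or {\sc D}, hence are bounded-homotopic. To salvage your iterative route you would need a lemma pinning down the terminal configurations of the reduction process in the presence of a loop --- precisely the case analysis you deferred --- whereas the direct comparison makes that analysis unnecessary.
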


\begin{proof} Let $\gamma$ be a normalisation of a kink. Let $\gamma(t_1)=\gamma(t_2)$ for $t_1<t_2$ be the self intersection of $\gamma$ and suppose that for some $\delta>0$ the kink satisfies proximity condition {\sc A}, {\sc B} or {\sc D}. By applying a similar argument as in Proposition \ref{lengthred} to the interval $[t_1-\delta,t_2+\delta]$, we have that
$${\mathcal L(\gamma, t_1-\delta, t_1)}+{\mathcal L(\gamma, t_2, t_2+\delta)}>{\mathcal L(\beta)}$$
where $\beta$ is the {\sc csc} replacement path constructed between $\gamma(t_1-\delta)$ and $\gamma(t_2+\delta)$ as in Proposition \ref{construct}. On the other hand, by Corollary \ref{looplength} we have that
$${\mathcal L}(\gamma, t_1, t_2)\geq 2 \pi. $$
 Therefore we have that
 $${\mathcal L}(\gamma, t_1-\delta, t_2+\delta)\geq{\mathcal L(\beta)}+2 \pi. $$

Here ${\mathcal L(\beta)}+2 \pi $ corresponds to the length of a ${\mathscr C}_3$ component. Since $\gamma$ and the component of type ${\mathscr C}_3$ are paths of piecewise constant curvature, it is easy to see that these paths are bounded-homotopic since both paths have the same turning number and satisfy proximity condition {\sc A}, {\sc B} or {\sc D}.
 \end{proof}

\section {Length Minimising Bounded Curvature Paths in $\Gamma(n)$}

\begin{proposition} \label{c3nomin}A non-degenerate component of type ${\mathscr C}_3$ is not a path of minimal length.
\end{proposition}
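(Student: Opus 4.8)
The plan is to show that a given non-degenerate component $\gamma$ of type $\mathscr{C}_3$ admits a bounded curvature homotopy, within its own homotopy class, that strictly decreases length; since a path that can be perturbed so as to decrease its length cannot be a length minimiser, this proves the statement. I would start by recording the feature that cannot be removed. Writing the first self intersection as $\gamma(t_1)=\gamma(t_2)$ with $t_1<t_2$, the restriction $\ell=\gamma|_{[t_1,t_2]}$ is a loop, so $\mathcal{L}(\ell)\geq 2\pi$ by Corollary \ref{looplength}. This self intersection fixes the turning number, so the reduction we perform must keep the loop intact; in particular we cannot expect to reduce $\gamma$ all the way to a {\sc csc} or {\sc ccc} Dubins path, only to lower its complexity within $\Gamma(n)$.

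The main mechanism is that, combinatorially, $\gamma$ is a {\sc cscsc} path, that is, it has the same type as a component of type $\mathscr{C}_1$, and by hypothesis each of its arcs and segments has strictly positive length. By Theorem \ref{scsccsnomin} such a configuration is not of minimal length, and the reduction of Proposition \ref{lengthred} deforms it through bounded curvature paths to a path of strictly smaller complexity whose length does not increase, the decrease being strict precisely because the arc or segment being eliminated has positive length. I would carry out exactly this complexity-lowering deformation, but constrained so that the loop $\ell$ is preserved, producing a degenerate component of type $\mathscr{C}_3$ (a {\sc csc} path carrying the same loop, one of the former {\sc cscsc} pieces having shrunk to zero length) of strictly smaller length. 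Splitting the length as a contribution $\mathcal{L}(\ell)$ from the loop plus the contribution of the reducible remainder makes the bookkeeping immediate: the loop contributes the same $\mathcal{L}(\ell)\geq 2\pi$ in both paths, while the remainder strictly shortens, so the deformed path $\hat\gamma$ satisfies $\mathcal{L}(\hat\gamma)<\mathcal{L}(\gamma)$.

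The step I expect to be the main obstacle is to verify that this deformation is an honest \emph{bounded curvature} homotopy that keeps $\hat\gamma$ in the homotopy class of $\gamma$. Turning number preservation is automatic once the deformation is realised through bounded curvature paths, since by Corollary \ref{wcpm} the turning number is constant on connected components of $\Gamma(\mbox{\sc x,y})$; the real content is therefore to execute the {\sc cscsc} reduction without ever violating the curvature bound and, crucially, without sweeping the reducible part across the loop in a way that would cancel the self intersection (this is exactly the phenomenon that allows paths with loops to avoid reducing to a Dubins path). This is where the non-degeneracy hypothesis is essential: the positive lengths of the arcs and segments leave room to perform the reduction locally, away from $\ell$, by a projection-type deformation of the kind used in Proposition \ref{kink}, which keeps the absolute curvature bounded by $1$ and preserves the proximity condition {\sc A}, {\sc B} or {\sc D} satisfied by $\gamma$. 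Once this deformation is confirmed to be a bounded curvature homotopy fixing $\ell$, the strict inequality $\mathcal{L}(\hat\gamma)<\mathcal{L}(\gamma)$ shows that a non-degenerate component of type $\mathscr{C}_3$ cannot be a length minimiser.
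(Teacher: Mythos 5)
Your proposal is correct and takes essentially the same route as the paper: the paper's proof likewise exploits the {\sc cscsc} structure of the component, applying Theorem \ref{scsccsnomin} to the sub-path between two points $P$ and $Q$ chosen around the self intersection so that the loop is preserved, thereby producing a strictly shorter component of type ${\mathscr C}_3$ in the same class. Your extra bookkeeping via Corollary \ref{looplength} and the explicit verification that the reduction is a bounded curvature homotopy fixing the loop are elaborations of exactly the mechanism the paper invokes tersely.
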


\begin{proof} Consider a non-degenerate component of type ${\mathscr C}_3$. Let $G$ be the self intersection in the component and consider the points $P$ and $Q$ as shown at the right in Figure \ref{figrep3scsnomin}. By applying Theorem \ref{scsccsnomin} to the non-degenerate component of type ${\mathscr C}_3$ with respect to the points $P$ and $Q$ we constructed a shorter non-degenerate component of type ${\mathscr C}_3$ concluding the proof.
\end{proof}
\begin{definition} We denote by $\mbox{\sc c}^\chi $ a unit circle traversed $2 \chi  \pi $ times.
\end{definition}

Some $cs$ paths can be presented in several ways. For example, a $\mbox{\sc r}^{\chi } \mbox{\sc s}\mbox{\sc r}$ path can also be presented as $\mbox{\sc r} \mbox{\sc s}\mbox{\sc r}^{\chi }\mbox{\sc s}\mbox{\sc r}$ having both paths the same length (see Figure \ref{figsingdubpath1}). In addition, a $cs$ path is called {\it symmetric} if the arcs lying in the adjacent circles have the same orientation, otherwise the path is called {\it skew}. In the next result we present the paths having minimal complexity. In addition, the circle traversed $2 \chi  \pi $ times is placed at the beginning (when possible).

\begin{theorem} \label{singudub} Given $\mbox{\sc x,y} \in T{\mathbb R}^2$ and $n\in{\mathbb Z}$. The length minimiser in $\Gamma(n)$ must be of one of the following types:

\begin{itemize}
\item {\sc csc} or {\sc ccc}.
\item Symmetric $\mbox{\sc c}^\chi \mbox{\sc s}\mbox{\sc c}$ or $\mbox{\sc c}^{\chi}\mbox{\sc c} \mbox{\sc s}\mbox{\sc c}$.
\item Skew $\mbox{\sc c}^{\chi}\mbox{\sc s}\mbox{\sc c}$ or  $\mbox{\sc c}\mbox{\sc s}\mbox{\sc c}^{\chi}$.
\item $\mbox{\sc c}^\chi \mbox{\sc c}\mbox{\sc c}$ or $\mbox{\sc c} \mbox{\sc c}^{\chi}\mbox{\sc c}$.
 \end{itemize}
Here $\chi$ is the minimal number of crossings for paths in $\Gamma(n)$. In addition, some of the circular arcs or line segments may have zero length. In particular, we have Theorem \ref{embdub} in the homotopy class containing the length minimiser in $\Gamma(\mbox{\sc x},\mbox{\sc y})$ (See Figure \ref{figsingmov}).
\end{theorem}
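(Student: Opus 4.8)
\textbf{Proof proposal for Theorem \ref{singudub}.}

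The plan is to reduce an arbitrary length minimiser in $\Gamma(n)$ to one of the listed canonical forms by systematically lowering its complexity, using the reduction machinery assembled in the previous sections. First I would fix a length minimiser $\gamma \in \Gamma(n)$; by Theorem \ref{homotarg} (Normalisation) I may assume without loss of generality that $\gamma$ is already a $cs$ path of length at most that of the original minimiser, and since bounded-homotopy preserves the turning number (Corollary \ref{wcpm}), the normalised path still lies in $\Gamma(n)$. The strategy is then to show that a minimiser cannot contain any of the ``reducible'' sub-configurations identified earlier, so the only paths that survive are precisely those in the stated list.

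The heart of the argument is a case analysis driven by the self-intersection structure. By Whitney's theorem the path has a minimal number $\chi$ of transversal self-intersections, and I would argue that every loop of $\gamma$ can be isolated as a kink and, by Proposition \ref{kink}, replaced (under proximity condition {\sc A}, {\sc B} or {\sc D}) by a component of type ${\mathscr C}_3$ of no greater length. Proposition \ref{c3nomin} then forces every such ${\mathscr C}_3$ component of a minimiser to be \emph{degenerate}, which is exactly what collapses the loop into a circle $\mbox{\sc c}^{\chi}$ traversed $2\chi\pi$ times; Corollary \ref{looplength} guarantees each loop contributes length at least $2\pi$, so a minimiser in $\Gamma(n)$ must realise precisely the minimal crossing number $\chi$ dictated by its homotopy class. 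For the loop-free portion, Theorem \ref{scsccsnomin} rules out components of type ${\mathscr C}_1$ and ${\mathscr C}_2$ in a minimiser, while Proposition \ref{lengthred} lets me repeatedly reduce complexity on admissible components; what remains after finitely many such reductions is a path of type {\sc csc} or {\sc ccc} with a single degenerate loop $\mbox{\sc c}^{\chi}$ attached, which one then puts into the normal form where the repeated circle sits at the beginning when possible (cf. the presentation identity $\mbox{\sc r}^{\chi}\mbox{\sc s}\mbox{\sc r} = \mbox{\sc r}\mbox{\sc s}\mbox{\sc r}^{\chi}\mbox{\sc s}\mbox{\sc r}$). Finally, invoking Theorem \ref{embdub} on the loop-free ``core'' and distinguishing the symmetric versus skew cases accounts for the four bullet points.

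I expect the main obstacle to be the interplay between the loop-collapsing reductions and the global length-minimality, specifically showing that the reduced path remains in the \emph{same} class $\Gamma(n)$ and that the reductions terminate with the repeated circle in a single canonical location. The delicate point is that Proposition \ref{kink} and the ${\mathscr C}_3$ analysis are stated under proximity conditions {\sc A}, {\sc B} or {\sc D}, whereas the minimiser may a priori wander through a region where only condition {\sc C} applies; I would need to argue that a genuine minimiser cannot contain a non-admissible (condition {\sc C}) component, since such a component is bounded-homotopic to arbitrarily long paths and hence cannot be length-extremal within its class. Once the non-admissible case is excluded, the remaining reductions are finite and monotone in both length and complexity (the complexity is a nonnegative integer that strictly decreases at each nondegenerate reduction step), so the process terminates and leaves exactly the enumerated minimal-complexity forms. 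The specialisation to Theorem \ref{embdub} then follows by taking $n$ to be the turning number of the global minimiser, for which $\chi = 0$ and the $\mbox{\sc c}^{\chi}$ factor disappears.
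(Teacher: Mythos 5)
Your overall scaffolding matches the paper's: normalise via Theorem \ref{homotarg}, replace kinks by degenerate components of type ${\mathscr C}_3$ via Proposition \ref{kink}, collapse the loops into a $\mbox{\sc c}^{\chi}$ factor, and apply Theorem \ref{embdub} to the loop-free core. But there is a genuine error in your handling of the non-admissible case, and it is not a side issue. You propose to \emph{exclude} non-admissible (condition {\sc C}) components from a minimiser on the grounds that such a component is bounded-homotopic to arbitrarily long paths ``and hence cannot be length-extremal within its class.'' That inference is invalid: under conditions {\sc A}, {\sc B} and {\sc C} alike, \emph{every} class contains arbitrarily long paths (the paper states this explicitly), yet each homotopy class still has a length minimiser --- the existence of long paths in a class says nothing about whether a given path is the shortest one in it. Worse, excluding this case would make your list of canonical forms incomplete: in the paper's proof the non-admissible ${\mathscr C}_3$ components are exactly where the fourth bullet comes from. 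There, Proposition \ref{c3nomin} shows a non-degenerate non-admissible ${\mathscr C}_3$ is not minimal, and depending on the endpoint condition the component is homotoped to a {\sc csc} or {\sc ccc} type path, yielding the forms $\mbox{\sc c}^{\chi}\mbox{\sc c}\mbox{\sc c}$ and $\mbox{\sc c}\mbox{\sc c}^{\chi}\mbox{\sc c}$ (Figure \ref{figrepclosesingtwo}, bottom, where such a path with $\chi=1$ is even the \emph{global} minimiser). Your argument, if it worked, would prove these minimisers do not exist, contradicting the theorem you are trying to prove.

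The second gap is the step you yourself flag as delicate and then leave unresolved: how the loops aggregate into a single $\mbox{\sc c}^{\chi}$ at a canonical location, and why the minimiser realises exactly the minimal crossing number $\chi$. The paper's mechanism is concrete: since translations are isometries, the middle circle of each degenerate ${\mathscr C}_3$ component is slid along the $cs$ path, without changing length, until it is tangent to $X$ (or $Y$); once all loops sit on a common adjacent circle, oppositely oriented loops cancel in pairs, which is precisely what reduces the loop count to $\chi$ and justifies the $2\chi\pi$ circle. Your appeal to Corollary \ref{looplength} (each loop costs at least $2\pi$) is the right length estimate, but by itself it does not rule out a minimiser carrying a cancelling pair of loops in the same class $\Gamma(n)$ --- you need the sliding-and-cancellation argument to convert ``extra loops'' into a length surplus. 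The presentation identity $\mbox{\sc r}^{\chi}\mbox{\sc s}\mbox{\sc r}=\mbox{\sc r}\mbox{\sc s}\mbox{\sc r}^{\chi}\mbox{\sc s}\mbox{\sc r}$ you cite only re-describes a single path; it is not a substitute for the isometric sliding of loops along the path.
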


\begin{proof} Consider a path $\gamma \in \Gamma(n)$ and a fragmentation for $\gamma$. By applying Theorem \ref{homotarg} we obtain a normalisation of $\gamma$ of length at most the length of $\gamma$. Consider the first self intersection of the normalisation of $\gamma$. By recursively applying Proposition \ref{kink} we replace the kinks by degenerates components of type ${\mathscr C}_3$ (see Figure \ref{figrep3scsnomin} centre). Since translations are isometries, we slide the middle circle in the component of type ${\mathscr C}_3$ along the $cs$ path and place it to be tangent to $X$ (or $Y$) see Figure \ref{figsingmovccc}. By continuing with this procedure until all the loops are tangent to $X$ (or $Y$) and after cancelling out oppositely oriented loops we end up with a circle traversed $2 \chi  \pi $ times concatenated with a $cs$ path without loops. Then by applying Theorem \ref{embdub} to the $cs$ path, we obtain the minimal length path as one of the six {\sc csc} or {\sc ccc} paths (concatenated with a $\mbox{\sc c}^{\chi}$). Figure \ref{figsingmov} illustrates a {\sc csc} path concatenated with a degenerated $\mbox{\sc c}^{\chi}$. Figure \ref{figsingmovccc} illustrates a {\sc ccc} path concatenated with a non-degenerate $\mbox{\sc c}^{\chi}$. There is one case left. After reducing the fragmentation we may end up with a non-admissible component of type ${\mathscr C}_3$ which by Proposition \ref{c3nomin} it is not a path of minimal length. Depending on the  endpoint condition we may homotope such a component to a {\sc csc} or {\sc ccc} path (see Figure \ref{figrepclosesingtwo}) or to a path of higher complexity. It is not hard to see that the higher complexity path is not a path of minimal length. 
We then homotope the path to a {\sc csc} concatenated with a $\mbox{\sc c}^{\chi}$ component (possibly degenerated) or to a $\mbox{\sc c}^\chi \mbox{\sc c}\mbox{\sc c}$ or $\mbox{\sc c} \mbox{\sc c}^{\chi}\mbox{\sc c}$ (possibly degenerated) see Figure \ref{figrepclosesingtwo}.
 \end{proof}
{ \begin{figure} [[htbp]
 \begin{center}
\includegraphics[width=1.0\textwidth,angle=0]{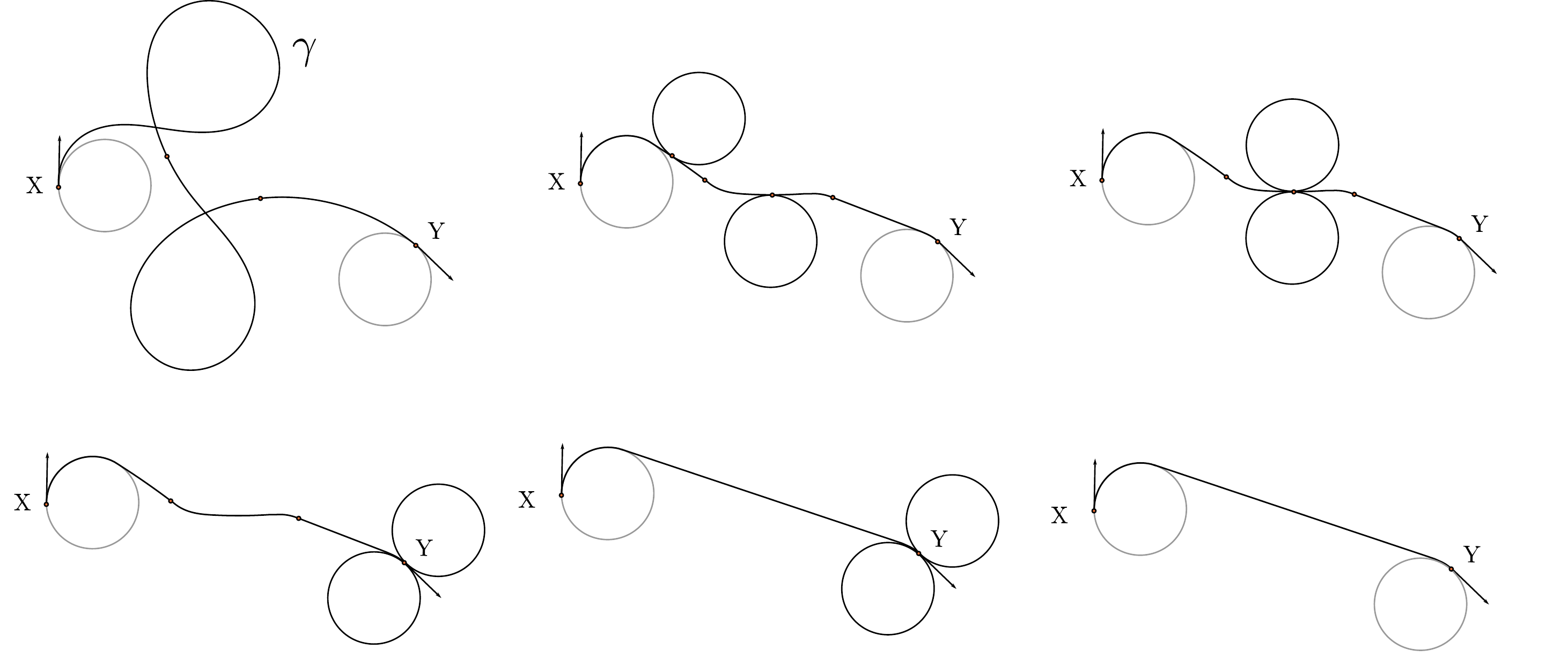}
\end{center}
\caption{By applying Theorem \ref{singudub} to $\gamma$ we obtain the minimal length element in $\Gamma(1)$. Such a {\sc csc} path is indeed the length minimiser in $\Gamma(\mbox{\sc x,y})$. Other length minimisers can be seen in Figures \ref{figsingdubpath1}, \ref{figsingdubpath2} or \ref{figsingdubpath3}.}
\label{figsingmov}
\end{figure}}

{ \begin{figure} [[htbp]
 \begin{center}
\includegraphics[width=1.5\textwidth,angle=90]{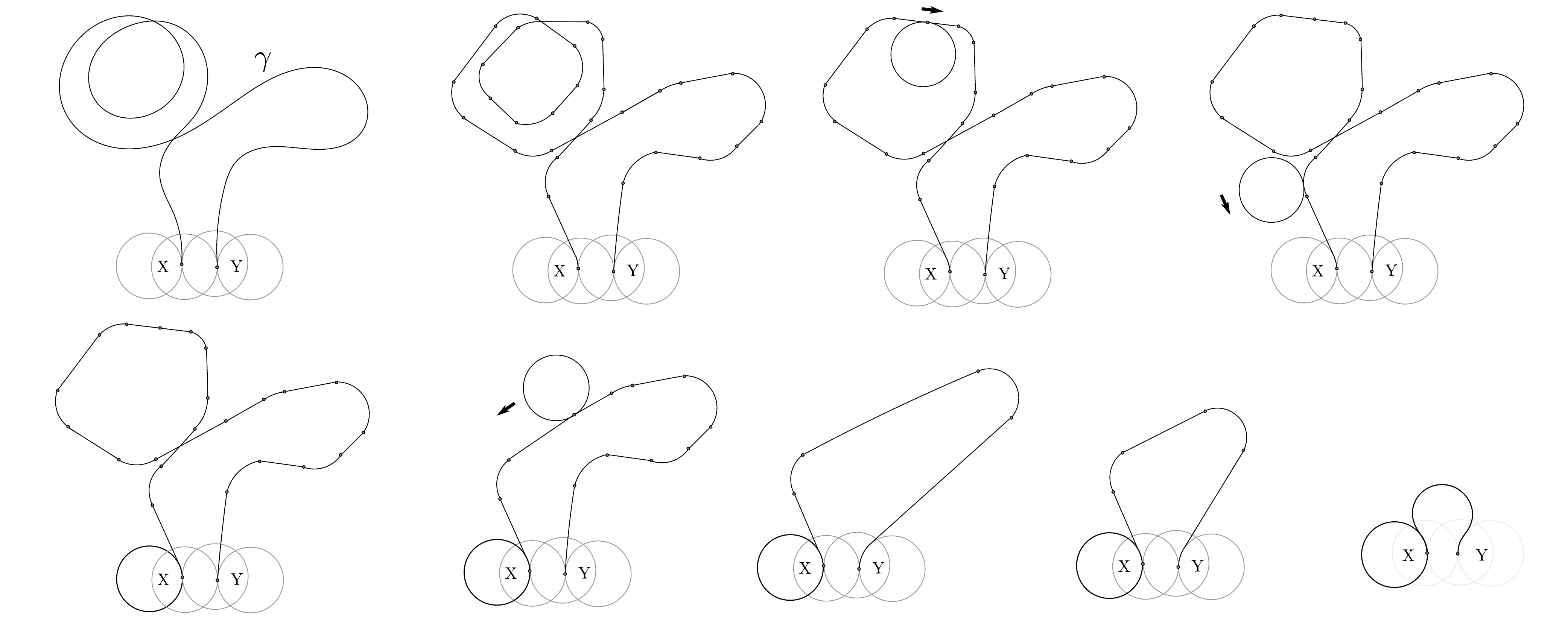}
\end{center}
\caption{An illustration of Theorem \ref{singudub} applied to a path in $\Gamma(-1)$ to obtain the length minimiser in its homotopy class. The global minimum of length in $\Gamma(\mbox{\sc x},\mbox{\sc y})$ lies in $\Gamma(1)$.}
\label{figsingmovccc}
\end{figure}}

{ \begin{figure} [[htbp]
 \begin{center}
\includegraphics[width=1\textwidth,angle=0]{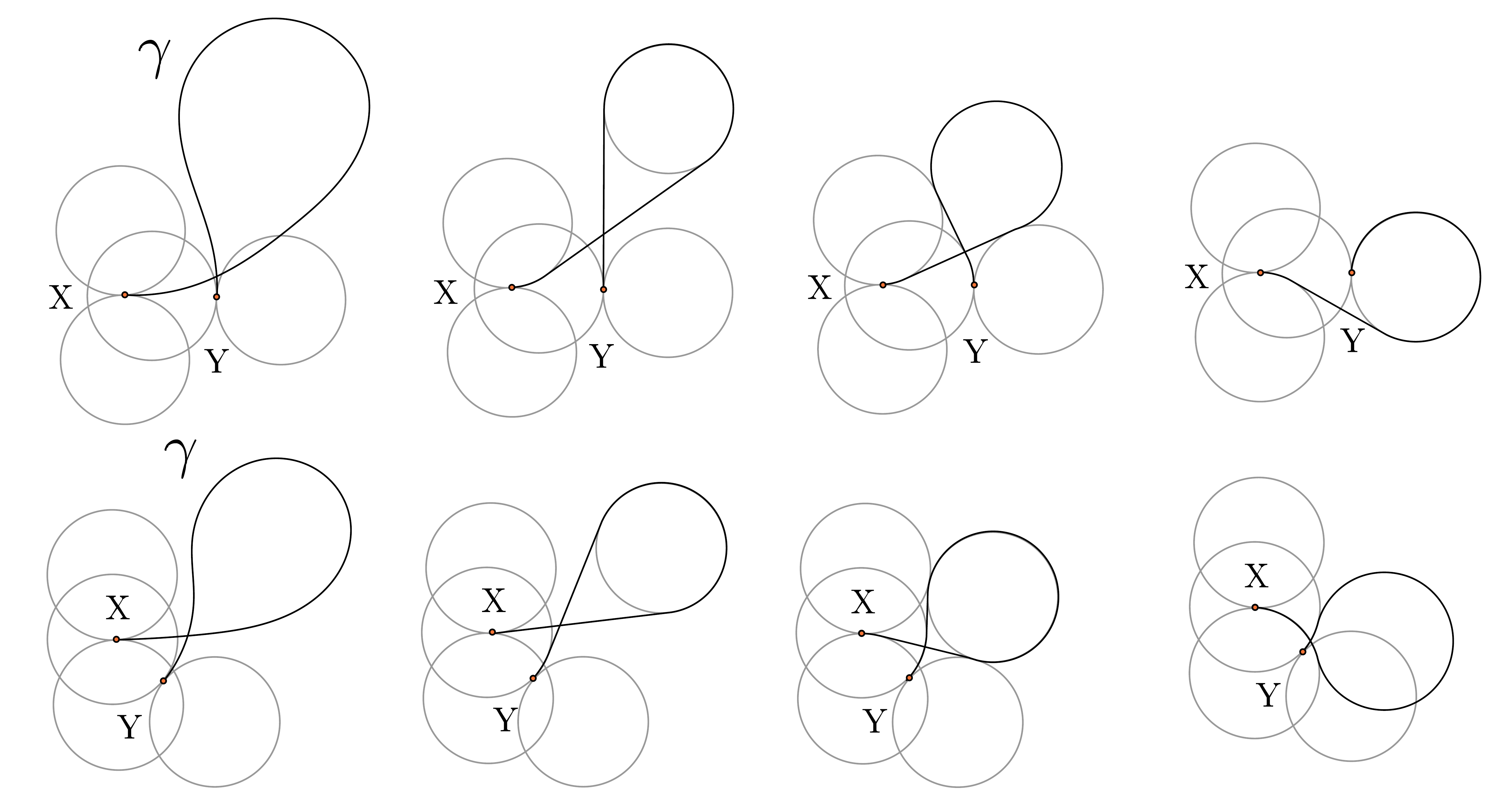}
\end{center}
\caption{Top: The length minimiser in the homotopy class of $\gamma$ is a $\mbox{\sc c}\mbox{\sc s}\mbox{\sc c}$. Bottom: The length minimiser in the homotopy class of $\gamma$ is a $\mbox{\sc c} \mbox{\sc c}\mbox{\sc c}$ path with ${\chi}=1$; such a path is also the global minimum of length.}
\label{figrepclosesingtwo}
\end{figure}}

{ \begin{figure} [[htbp]
 \begin{center}
\includegraphics[width=1\textwidth,angle=0]{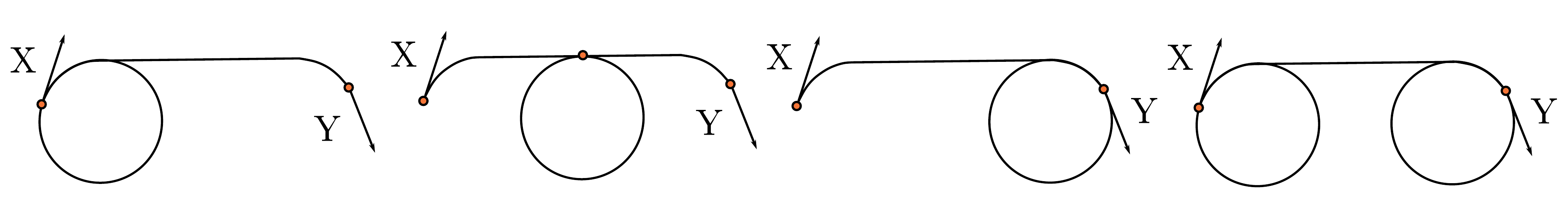}
\end{center}
\caption{Examples of minimal length paths bounded-homotopic to a $\mbox{\sc r}^\chi \mbox{\sc s}\mbox{\sc r}$ path.}
\label{figsingdubpath1}
\end{figure}}

{ \begin{figure} [[htbp]
 \begin{center}
\includegraphics[width=1\textwidth,angle=0]{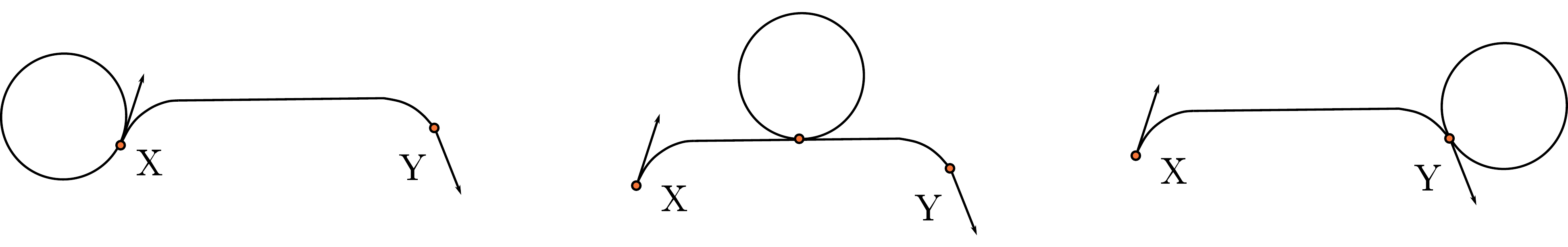}
\end{center}
\caption{Examples of minimal length paths bounded-homotopic to a $\mbox{\sc l}^{\chi}\mbox{\sc r}\mbox{\sc s}\mbox{\sc r}$ path.}
\label{figsingdubpath2}
\end{figure}}

{ \begin{figure} [[htbp]
 \begin{center}
\includegraphics[width=1\textwidth,angle=0]{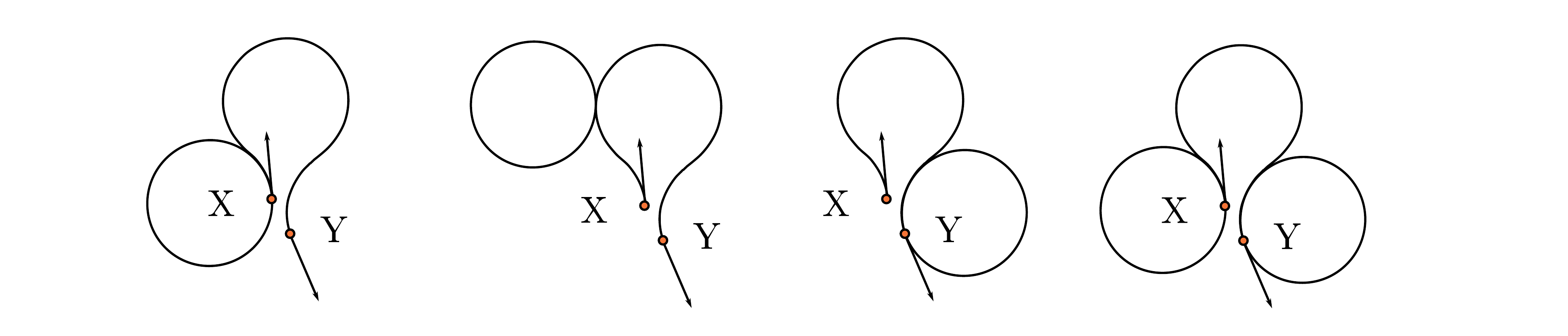}
\end{center}
\caption{Examples of minimal length paths bounded-homotopic to a $\mbox{\sc l}^\chi \mbox{\sc r}\mbox{\sc l}$ path.}
\label{figsingdubpath3}
\end{figure}}

\bibliographystyle{amsplain}
   
\end{document}